\theoremstyle{plain}
\newtheorem{theorem}{Theorem}
\newtheorem{proposition}{Proposition}
\newtheorem{corollary}{Corollary}
\newtheorem{lemma}{Lemma}
\theoremstyle{remark}
\newcommand{\R}{\mathbb{R}}
\newcommand{\N}{\mathbb{N}}
\newcommand{\C}{\mathbb{C}}
\newcommand{\e}{\varepsilon}
\newcommand{\Z}{\mathscr{X}}
\newcommand{\X}{\mathscr{X}}
\def\Ext{\operatorname{Ext}}
\def\Ext{\operatorname{Ext}}
\begin{document}


\bigskip

\bigskip

\title{Complex interpolation and twisted twisted Hilbert spaces}
\author{F\'elix Cabello S\'anchez}
\author{Jes\'{u}s M. F. Castillo}
\address{Departamento de Matem\'aticas\\ Universidad de Extremadura\\
Avenida de Elvas\\ 06071-Badajoz\\ Spain} \email{fcabello@unex.es, castillo@unex.es}
\author{Nigel J. Kalton $\dag$}

\thanks{This research has been supported in part by project
MTM2010-20190-C02-01 and the program Junta de Extremadura GR10113
IV Plan Regional I+D+i, Ayudas a Grupos de Investigaci\'on,}


\maketitle

\begin{abstract}  {\bf We show that Rochberg's generalizared interpolation spaces $\Z^{(n)}$
arising from analytic families of Banach spaces form exact
sequences $0\to  \Z^{(n)} \to \Z^{(n+k)} \to \Z^{(k)} \to 0$. We
study some structural properties of those sequences; in particular, we show
that nontriviality, having strictly singular quotient map, or
having strictly cosingular embedding depend only on the basic case
$n=k=1$. If we focus on the case of Hilbert spaces obtained from
the interpolation scale of $\ell_p$ spaces, then $\Z^{(2)}$
becomes the well-known Kalton-Peck $Z_2$ space; we then show that
$\Z^{(n)}$ is (or embeds in, or is a quotient of) a twisted Hilbert
space only if $n=1,2$, which solves a problem posed by David
Yost; and that it does not contain $\ell_2$ complemented unless
$n=1$. We construct another nontrivial twisted sum of $Z_2$ with itself that
contains $\ell_2$ complemented.}\end{abstract}

\markboth{\underline{F Cabello S\'anchez, JMF Castillo, NJ Kalton}}{\underline{Complex Interpolation and Twisted Twisted Hilbert Spaces}}

\section{Introduction}

In 1979, Kalton and Peck developed a method to produce
nontrivial self-extensions of most quasi-Banach spaces with
unconditional basis \cite[Section 4]{kaltpeck}, including all Banach spaces apart from $c_0$; see \cite[Theorem~1]{ccs}.
 The most glaring examples are
perhaps the so-called $Z_p$ spaces, which are twisted sums of the
$\ell_p$ spaces. If, however, one wants to construct twisted sums
of $Z_p$, the Kalton-Peck's method simply does not work because of
their poor unconditional structure. On the other hand, the
existence of such twisted sums is guaranteed by the local theory
of exact sequences, at least when $p>1$; see e.g., \cite{cabecastuni}. Our starting goal with this
paper was to develop a method to obtain twisted sums of twisted
sum spaces, keeping the $Z_p$ spaces as the control
case.



The path connecting interpolation theory and twisted sums was opened by Rochberg and Weiss, who introduce in
\cite{rochberg-weiss} certain spaces which naturally arise in the
study of ``analytic families'' of Banach spaces and that turn out to be twisted sums of the ``intermediate'' spaces.
Actually, if $\mathscr F$ is
the usual Calder\'on space of analytic functions on the strip
$0<\Re z<1$ associated to the couple $(\ell_\infty,\ell_1)$ in the
complex interpolation method then, as it is well-known, $
[\ell_\infty,\ell_1]_\theta=\{f(\theta): f\in\mathscr F\}=\ell_p,
$ where $p=1/\theta$ and $0<\theta<1$ and
\begin{equation}\label{zp}
Z_p=\{(f'(\theta), f(\theta)): f\in\mathscr F\},
\end{equation}
with the quotient norm inherited from $\mathscr F$ ---though this
is not made explicit in \cite{rochberg-weiss}.

Nothing seems
to prevent one from adding more derivatives to (\ref{zp}) and
figure out that the resulting space represents the iterated twisted sum
spaces. Such is exactly what Rochberg did in \cite{rochberg}, in
the broader setting of analytic families of Banach spaces. Performing that is not, by far, as simple as it sounds; and
perhaps the turning point in Rochberg's approach is the using of
Taylor coefficients instead of merely putting
derivatives, as it is suggested in \cite[Section 10, p.
1161]{kal-mon}.

Such approach is the one we adopt in this paper, which in this
regard can well be considered a spin-off from Rochberg's
\cite{rochberg}; with several variations, the first of which is
the use of admissible spaces of analytic functions instead of
analytic families, what makes ``reiteration'' both unavailable and
unnecessary. Thus, given an admissible space of analytic functions
$\mathscr F$, we consider the space $\Z^{(n)}$ of all possible
lists of Taylor coefficients of functions in $\mathscr F$ of
lenght $n$ -- at a fixed point $z$ which is understood from now on
-- endowed with the obvious infimum norm on it. Then we observe
that if $m=n+k$, there is an exact sequence
\begin{equation}\label{nmk}
\begin{CD}
0@>>> \Z^{(n)} @>>> \Z^{(m)}@>>> \Z^{(k)} @>>>0
\end{CD}
\end{equation}
and so $\Z^{(m)}$ is a twisted sum of $\Z^{(n)}$ and $\Z^{(k)}$. The
key nontrivial step here is obtaining the right form of the
embedding. To this we devote section \ref{maps} in which we
obtain two (equivalent) representations for the embedding, depending on the
representation of the spaces. Regarding the sequences themselves, we will
show that many properties, such as nontriviality,
having strictly singular quotient map, or having strictly
cosingular embedding depend only on the seed case $n=k=1$.
The nontriviality of this case has to be worked apart.

We then focus on the case in which $\mathscr F$ is the Calder\'on
space of the couple $(\ell_\infty,\ell_1)$. If we fix $z={1\over
2}$, then $\Z^{(1)}=\ell_2$ and $\Z^{(2)}$ is the Kalton-Peck $Z_2$ space
\cite{kaltpeck}. The space $\Z^{(3)}$ is both a twisted sum
of $\ell_2$ with $Z_2$ and a twisted sum of $Z_2$ with $\ell_2$,
and $\Z^{(4)}$ is, among other possibilities, a twisted sum of
$Z_2$ with itself, as desired. We then pass to establish
structural properties of the spaces $\Z^{(n)}$ and of the sequences
(\ref{nmk}). Regarding the spaces, we will show that $\Z^{(n)}$ is
(or embeds in, or is a quotient of) a twisted Hilbert space only
if $n=1,2$ --which solves a problem posed by David Yost-- and that
it does not contain $\ell_2$ complemented unless $n=1$. To put
this result in perspective, we will construct a nontrivial twisted
sum of $Z_2$ with itself that contains $\ell_2$ complemented.

\section{Preliminaires}

We warmly recommend the reader who is not familiar with
Kalton and Peck paper \cite{kaltpeck} or Rochberg's
\cite{rochberg} to postpone this article until get acquainted with
them. Perusing the papers \cite{cjmr, r-uses}, the article
\cite{kal-mon} in the Handbook and the monograph \cite{castgonz}
can help with the background. Anyway, the basic ingredients to
read this paper are operatively described next.
\subsection{Exact sequences} A short sequence of Banach spaces and (linear, bounded) operators
\begin{equation}\label{abc}
\begin{CD}
0@>>> A @>I>> B@>Q>> C @>>>0\end{CD}
\end{equation}
is said to be exact if the kernel of each arrow equals the image of the preceding one.
As $I(A)$ is closed in $B$ the operator $I$ embeds $A$ as a subspace of $B$ and
$C$ is isomorphic to the quotient $B/I(A)$, by the open mapping theorem.
For this reason one often says that $B$ is a twisted sum of $A$ with $C$ (in that order); the whole sequence (\ref{abc}) is said to be an extension of $C$ by $A$ (the order was reversed by ``functorial'' reasons).

The extension (\ref{abc}) is said to be trivial if there is an
operator $P:B\to A$ such that $P\circ I={\bf 1}_A$ (i.e., $I(A)$
is complemented in $B$); equivalently, there is an  operator
$J:C\to B$ such that $Q\circ J={\bf 1}_C$. In this case $P\times
Q:B\to A\times C$ is an isomorphism, with inverse $I\oplus J$ and
thus the ``twisted sum'' $B$ is (isomorphic to) the direct sum
$A\oplus C=A\times C$.

\subsection{Admissible spaces of analytic functions}
We will work within the framework of an admissible space of
analytic functions as defined by Kalton and Montgomery-Smith in
\cite[Section~10]{kal-mon}. So, let $U$ be an open set of $\C$
conformally equivalent to the disc $\mathbb D=\{z\in\mathbb C: |z|<1\}$ and $W$ a complex
Banach space. A Banach space $\mathscr F$ of analytic functions
$F:U\to W$ is said to be admissible provided:
\begin{itemize}
\item[(a)] For each $z\in U$, the evaluation map
$\delta_z:\mathscr F\to W$ is bounded. \item[(b)] If
$\varphi:U\to\mathbb D$ is a conformal equivalence, then
$F\in\mathscr F$ if and only if $\varphi\cdot F\in\mathscr F$ and
$\|\varphi\cdot F\|_{\mathscr F}= \|F\|_{\mathscr F}$.
\end{itemize}

For each $z\in U$ we define $X_z=\{x\in W: x=F(z) \text{ for some
} F\in\mathscr F\}$ with the norm $\|x\|=\inf\{\|F\|_{\mathscr F}:
x=F(z)\}$ so that  $X_z$ is isometric to $\mathscr F/\ker\delta_z$
with the quotient norm. One often says that $(X_z)_{z\in U}$ is an
analytic family of Banach spaces. The simplest examples arise from
complex interpolation. Indeed, let $(X_0,X_1)$ be a Banach couple
and take $W=X_0+X_1$ and $U$ the strip $0<\Re z<1$. Let $\mathscr
F=\mathscr C(X_0,X_1)$ be the Calder\'on space of those continuous
functions $F:\overline U\to W$ which are analytic on $U$ and
satisfy the boundary conditions that, for $k=0,1$ one has
$F(k+ti)\in X_k$ and $\|F\|_{\mathscr C}=\sup\{\|F(k+ti)\|_{X_k}:
t\in\R, k=0,1\}<\infty$. Then $\mathscr F$ is admissible and
$X_z=[X_0,X_1]_\theta$, with $\theta=\Re z$, is an analytic
family.

It is important now to realize that when $\mathscr F$ is
admissible, then the map $\delta_z^{n}:\mathscr F\to W$,
evaluation of the $n$-{th} derivative at $z$, is bounded for all
$z\in U$ and all $n\in \N$ by an iterated use of (a), the
definition of derivative and the principle of uniform boundedness.
Thus, it makes sense to consider the Banach spaces
\begin{equation}\label{Fker}
\mathscr F/\bigcap_{i<n}\ker\delta_{z}^{i}\quad\quad(n\in\N).
\end{equation}

\section{Exact sequences of derived spaces}\label{maps}

\subsection{Lists of Taylor coefficients}\label{taylor} Following Rochberg, let
us fix $z\in U$ and consider the following spaces:
$$
\Z^{(n)}_{z}= \{(x_{n-1},\dots,x_{0})\in W^n: x_i=\widehat
f[i;z]\text{ for some } f\in \mathscr F \text{ and all } 0\leq i<n
\},
$$
where $\widehat f[i;z]=f^{(i)}(z)/i!$ is the $i$-th Taylor
coefficient of $f$ at $z$. Thus, the elements of $ \Z^{(n)}_{z}$
are (truncated) sequences of Taylor's coefficients (at $z$) of
functions in $\mathscr F$ arranged in decreasing order. Here, we
deviate from Rochberg notation in two points: first, the
superscript $(n)$ refers to the ``number of variables'' and not to
the highest derivative, and second, we have arranged Taylor
coefficients decreasingly in order to match with the usual
notation for twisted sums, with the subspace on the left and the
quotient on the right. If we equip $\Z^{(n)}_{z}$ with the obvious
quotient norm, it is isometric to $\mathscr
F/\bigcap_{i<n}\ker\delta_{z}^{i}$ via Taylor coefficients and so it is complete.
From now on we shall omit the
base point $z$, which is understood to be fixed.

\subsection{Operators}
We introduce next certain ``natural'' operators linking the
various spaces $\Z^{(n)}$ as $n$ varies. Those operators will be used to construct the exact sequences we want.

To this end, for $1\leq n,
k<m$ we denote by $\imath_{n,m}:W^n\to W^m$ the inclusion on the
left given by
$\imath_{n,m}(x_n,\dots,x_1)=(x_n,\dots,x_1,0\dots,0)$ and by
$\pi_{m,k}:W^m\to W^k$ the projection on the right given by
$\pi_{m,k}(x_m,\dots, x_k,\dots,x_1)=  (x_k,\dots,x_1)$. While
$\pi_{m,k}$ is obviously a quotient map from $\Z^{(m)}$ onto
$\Z^{(k)}$, it is not clear at all that $\imath_{n,m}$ maps
$\Z^{(k)}$ to $\Z^{(n)}$, let alone its continuity. To prove that
this is indeed the case we need some extra work.

Observe that if $\varphi$ is as in (b) and if $\phi$ is a
``polynomial'' in $\varphi$, that is, $\phi=\sum_ia_i\varphi^i$
for some finite sequence of complex numbers $(a_i)$, then $\phi\cdot f\in \mathscr F$ for each $f\in\mathscr F$ and
$\|\phi\cdot f\|_{\mathscr F}\leq
(\sum_i|a_i|)\|f\|_{\mathscr F}$.

\begin{lemma}\label{phi}
Let $\varphi:U\to\mathbb D$ be a conformal equivalence vanishing
at $z$. Then, for  $0\leq k\leq m$ there is a polynomial $P$ of
degree at most $m$ such that $\widehat {P\circ \varphi}[
i;z]=\delta_{ik}$ for every $0\leq i\leq m$.
\end{lemma}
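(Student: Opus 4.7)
The plan is to reduce the statement to a finite-dimensional linear algebra problem, exploiting the fact that $\varphi$ vanishes to first order at $z$. Since $\varphi$ is a conformal equivalence with $\varphi(z)=0$, its Taylor expansion at $z$ begins as $\varphi(w) = \varphi'(z)(w-z) + O((w-z)^2)$ with $\varphi'(z)\neq 0$. Consequently, for every $j\ge 0$, the power $\varphi^{j}$ vanishes to order exactly $j$ at $z$, so $\widehat{\varphi^j}[i;z] = 0$ for all $i<j$ while $\widehat{\varphi^j}[j;z] = \varphi'(z)^j \neq 0$.

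Writing the unknown polynomial as $P(u)=\sum_{j=0}^m a_j u^j$, we have $P\circ\varphi = \sum_{j=0}^m a_j\varphi^j$, and the required identities $\widehat{P\circ\varphi}[i;z]=\delta_{ik}$ for $0\le i\le m$ translate into the $(m+1)\times(m+1)$ linear system
$$
\sum_{j=0}^m a_j\,\widehat{\varphi^j}[i;z] \;=\; \delta_{ik} \qquad (0\le i\le m).
$$
By the observation just made, the coefficient matrix $\bigl(\widehat{\varphi^j}[i;z]\bigr)_{0\le i,j\le m}$ is lower triangular with diagonal entries $\varphi'(z)^{i}\neq 0$, and so it is invertible. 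This produces the required $P$ (unique, of degree at most $m$).

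For a more conceptual description, I would also record the following: if $\psi:\mathbb D\to U$ denotes the inverse conformal equivalence, so that $\psi(0)=z$, then taking $P$ to be the degree-$m$ Taylor polynomial at $0$ of $u\mapsto (\psi(u)-z)^{k}$ gives $P(\varphi(w)) = (\psi(\varphi(w))-z)^{k} + O(\varphi(w)^{m+1}) = (w-z)^{k} + O((w-z)^{m+1})$, which is exactly the claim on Taylor coefficients up to order $m$. I do not foresee any real obstacle in carrying this out: the substantive content of the lemma is only the triangular structure of the substitution matrix forced by the conformality of $\varphi$ at $z$, and the conclusion about $P\circ\varphi\in\mathscr F$ being a genuine element (with good norm estimates) is already guaranteed by the remark preceding the lemma.
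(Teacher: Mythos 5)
Your proposal is correct. Your primary argument — writing $P\circ\varphi=\sum_{j=0}^m a_j\varphi^j$ and observing that the substitution matrix $\bigl(\widehat{\varphi^j}[i;z]\bigr)_{0\le i,j\le m}$ is triangular with nonzero diagonal entries $\varphi'(z)^i$ because $\varphi^j$ vanishes to order exactly $j$ at $z$ — is a genuinely different route from the paper's. The paper instead argues by pulling back along $\varphi^{-1}$: for any holomorphic $f$, the function $f\circ\varphi^{-1}$ is holomorphic near $0$, its degree-$m$ Taylor polynomial $P$ has contact of order $m$ with it at $0$, and composing back with $\varphi$ shows $P\circ\varphi$ has contact of order $m$ with $f$ at $z$; the lemma follows by taking $f(w)=(w-z)^k$. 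Your closing remark with $\psi=\varphi^{-1}$ is precisely this construction, so you have in effect supplied both proofs. The trade-off: the linear-algebra argument is more elementary and additionally yields uniqueness of $P$ among polynomials of degree at most $m$, while the paper's pullback argument gives the coefficients $a_n$ explicitly (as Taylor coefficients of $(\psi(u)-z)^k$ at $0$) and shows in one stroke that \emph{every} holomorphic $f$ agrees to order $m$ at $z$ with a polynomial in $\varphi$, a slightly stronger and reusable fact. Both are complete; there is no gap.
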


\begin{proof} If $f:U\to\mathbb C$ is holomorphic, then $f\circ\varphi^{-1}$ is
holomorphic on the disk and we have
$$
f(\varphi^{-1}(w))=\sum_{n=0}^\infty a_n w^n\quad\quad(|w|<1),
$$
where $a_n$ is the $n$-th Taylor coefficient of
$f\circ\varphi^{-1}$ at the origin. In particular
$f\circ\varphi^{-1}$ has a contact of order $m$ with the
polynomial defined by $P(w)=\sum_{n=0}^ma_nw^n$ at the origin. As
$\varphi$ is a conformal equivalence we have that
$f=f\circ\varphi^{-1}\circ\varphi$ has a contact of order $m$ with
the function
$$
P\circ\varphi= \sum_{n=0}^ma_n\varphi^n
$$
at $z=\varphi^{-1}(0)$. In particular the first $m$ derivatives of
$f$ and $\sum_{n=0}^ma_n\varphi^n$ agree at $z$. The Lemma follows
just applying this construction to the function $f(w)=(w-z)^k$.
\end{proof}

The following Proposition is a slight generalization of \cite[Proposition 3.1]{rochberg}:

\begin{proposition}\label{operators} Suppose $1\leq n, k<m$. Then:
\begin{itemize}
\item[(a)] The map $\imath_{n,m}: \Z^{(n)} \to \Z^{(m)}$ is bounded.

\item[(b)] The map $\pi_{m,k}: \Z^{(m)} \to \Z^{(k)}$ is an ``isometric'' quotient.
\end{itemize}
\end{proposition}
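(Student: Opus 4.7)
The plan is to dispatch part (b) by unpacking quotient norms, and to handle part (a) with an explicit one-line construction built from Lemma~\ref{phi}.

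For (b), both norms unfold to the same infimum. By definition, $\|(x_{k-1},\dots,x_0)\|_{\Z^{(k)}}=\inf\{\|f\|_{\mathscr F}:\widehat f[i;z]=x_i,\,0\le i<k\}$. On the other hand, the quotient norm induced by $\pi_{m,k}$ equals $\inf\{\|g\|_{\mathscr F}:\widehat g[i;z]=x_i\text{ for }0\le i<k\text{ with }\widehat g[i;z]\text{ free for }k\le i<m\}$; but the free higher conditions impose no constraint at all, so the two infima agree. Surjectivity follows from the same observation, since any realizer of $(x_{k-1},\dots,x_0)$ in $\mathscr F$ provides a lift in $\Z^{(m)}$.

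For (a), given $(x_{n-1},\dots,x_0)\in\Z^{(n)}$ realized by some $f\in\mathscr F$, I want to exhibit a member of $\mathscr F$ whose Taylor data at $z$ matches the padded tuple $\imath_{n,m}(x_{n-1},\dots,x_0)=(x_{n-1},\dots,x_0,0,\dots,0)\in W^m$. Reading the tuple by positions, this amounts to finding $F\in\mathscr F$ with $\widehat F[j;z]=0$ for $0\le j<m-n$ and $\widehat F[m-n+\ell;z]=x_\ell$ for $0\le\ell<n$, plus a bound $\|F\|_{\mathscr F}\le C\|f\|_{\mathscr F}$ with $C$ depending only on $m,n,\varphi$.

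My construction is $F=(P\circ\varphi)\cdot f$, where $P$ is the polynomial of degree at most $m$ produced by Lemma~\ref{phi} with $k=m-n$, so that $\widehat{P\circ\varphi}[i;z]=\delta_{i,m-n}$ for every $0\le i\le m$. The remark just before Lemma~\ref{phi} guarantees $F\in\mathscr F$ with $\|F\|_{\mathscr F}\le\|P\|_{\ell_1}\|f\|_{\mathscr F}$. A Cauchy-product computation $\widehat F[i;z]=\sum_{p+q=i}\widehat{P\circ\varphi}[p;z]\,\widehat f[q;z]$ shows that for $0\le i\le m-1$ the Kronecker property of $P\circ\varphi$ forces only the term $p=m-n$ to contribute, yielding $\widehat F[i;z]=0$ when $i<m-n$ and $\widehat F[i;z]=\widehat f[i-(m-n);z]=x_{i-(m-n)}$ when $m-n\le i\le m-1$, exactly what was prescribed. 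Passing to the infimum over realizers $f$ then delivers boundedness of $\imath_{n,m}$ with constant $\|P\|_{\ell_1}$.

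The crux is the choice of $P$ via Lemma~\ref{phi}: a single scalar multiplier simultaneously annihilates the Taylor coefficients of $f$ at positions below $m-n$ and shifts the values $x_0,\dots,x_{n-1}$ into positions $m-n,\dots,m-1$. Apart from that, the verification is routine Cauchy-product bookkeeping, so I anticipate no further genuine obstacle once Lemma~\ref{phi} is in hand.
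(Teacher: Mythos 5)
Your argument is correct and coincides with the paper's: part (b) is the same unfolding of quotient norms that the paper dismisses as obvious, and part (a) is exactly the paper's construction, multiplying a realizer $f$ by the scalar function $P\circ\varphi$ from Lemma~\ref{phi} (with $k=m-n$) and applying the Leibniz/Cauchy-product rule to shift the Taylor coefficients. No gaps.
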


\begin{proof} Part (b) is obvious. To prove (a) we must prove that there is a constant $M$ such that if
$(x_n,\dots,x_1)$ is the list of Taylor coefficients of some $f\in
\mathscr F$, then there is another $g\in\mathscr F$ whose
coefficients are $(x_n,\dots,x_1,0,\dots,0)$ with $\|g\|_{\mathscr
F}\leq M\|f\|_{\mathscr F}$. Set $k=m-n$ and apply Lemma~\ref{phi}
to get a polynomial $\phi=\sum_{i=0}^{m-1}a_i\varphi^i$ such that
$\widehat\phi[i;z]=\delta_{ik}$, for $0\leq i<n+k$ and take
$M=\sum_{i=0}^{n+k-1}|a_i|$. Now, if $f\in\mathscr F$ and $g=\phi
f$, then $\|g\|_{\mathscr F} \leq M\|f\|_{\mathscr F}$. Moreover,
for $i\in[0,n+k)$, one has
$$
\widehat g[i;z]=\widehat{(\phi f)}[i;z]=\sum_{j=0}^i \widehat\phi[j;z]\cdot \widehat f[i-j;z],
$$
by Leibniz rule. Hence $\hat g[i;z]=0$ if $i<k$ and for $i\geq k$
we have $ \widehat g[i;z]=\widehat f[i-k; z], $
 as required.
\end{proof}

\subsection{Exactness}
From now on we will omit the names $\imath_{m,n}$ and $\pi_{m,k}$
and so unlabelled arrows $\Z^{(n)}\to \Z^{(m)}$ must be understood
to be $\imath_{n,m}$ if $n\leq m$ and $\pi_{n,m}$ for $n\geq m$,
unless otherwise declared. With these conventions the aim of this Section is to prove that, given integers $n$ and $k$, the ``obvious'' sequence
 $0\longrightarrow \Z^{(n)}\longrightarrow
\Z^{(n+k)}\longrightarrow \Z^{(k)}\longrightarrow 0$ is exact.

First of all,  observe that
the various possible sequences passing through a given $\Z^{(m)}$
are compatible in the sense that if $m=k+n=i+j$, with $k<i$ say,
then the following diagram is commutative
\begin{equation}\label{commutative}
\begin{CD}
\Z^{(j)}@= \Z^{(j)}\\
@VVV @VVV \\
 \Z^{(n)}@>>>  \Z^{(m)}@>>>  \Z^{(k)}\\
@VVV@VVV@|\\
 \Z^{(n-j)}@>>>  \Z^{(i)}@>>>  \Z^{(k)}
\end{CD}
\end{equation}


The key point is isolated in the next lemma.

\begin{lemma}\label{content}
If $(x, 0, \dots, 0)\in \Z^{(k+1)}$, then $x\in \Z^{(1)}$.
\end{lemma}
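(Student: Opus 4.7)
The plan is to exploit the hypothesis that there is some $f \in \mathscr{F}$ with $\widehat{f}[i;z] = 0$ for $0 \le i < k$ and $\widehat{f}[k;z] = x$; equivalently, $f$ has a zero of order at least $k$ at $z$ with $k$-th Taylor coefficient $x$. I will ``divide away'' this zero by means of a conformal equivalence $\varphi : U \to \mathbb{D}$ chosen to vanish at $z$, producing a function in $\mathscr{F}$ whose value at $z$ is a nonzero scalar multiple of $x$; since $\Z^{(1)}$ is a vector space, this will suffice.

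The first step is to show that $g := f/\varphi^k$ belongs to $\mathscr{F}$. Because $\varphi$ is a conformal equivalence, its only zero in $U$ is at $z$ and it is simple; since $f$ vanishes there to order at least $k$, the quotient $g$ extends to a holomorphic function $U \to W$. The key tool is axiom (b) of admissibility, which asserts that for any analytic $F : U \to W$, one has $F \in \mathscr{F}$ if and only if $\varphi \cdot F \in \mathscr{F}$, with equal norms. Applied iteratively, peeling off one factor of $\varphi$ at a time from the identity $f = \varphi^k g \in \mathscr{F}$, this yields $g \in \mathscr{F}$ with $\|g\|_{\mathscr{F}} = \|f\|_{\mathscr{F}}$.

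The second step is a direct Taylor computation of $g(z)$. From $\varphi(w) = \varphi'(z)(w-z) + O((w-z)^2)$, with $\varphi'(z) \neq 0$ by conformality, and $f(w) = x(w-z)^k + O((w-z)^{k+1})$, one reads off $g(z) = x/\varphi'(z)^k$. Setting $h = \varphi'(z)^k\, g$, we get $h \in \mathscr{F}$ and $h(z) = x$, so $x \in \Z^{(1)}$ by the very definition of that space.

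The main (and in fact only) obstacle is justifying that division of $f$ by $\varphi^k$ keeps us inside the admissible space $\mathscr{F}$; this is precisely what the ``if and only if'' formulation of axiom (b) is designed to ensure, and is the one place where admissibility (beyond mere holomorphy) is genuinely used.
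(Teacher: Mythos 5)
Your proof is correct and follows essentially the same route as the paper's: both factor $f=\varphi^k g$ with $\varphi$ a conformal equivalence vanishing at $z$, invoke axiom (b) iteratively to conclude $g\in\mathscr F$ with the same norm, and then identify $x=\varphi'(z)^k g(z)$ by a Taylor/Leibniz computation. No gaps.
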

\begin{proof} Pick $x\in W$ and suppose $(x,0,\dots,0)\in \Z^{(k+1)}$. 
Let us take $f\in \mathscr F$ such that $\widehat f[i;z]=0$ for $i<k$
and $x=\widehat f[k;z]$. Then $f$ has a zero of order $k-1$ at $z$ and
it can be written as $f=\varphi^{k} g$, where $g:U\to W$ is
analytic. It follows from (b) that $g\in\mathscr F$ and
$\|g\|_\mathscr F=\|f\|_\mathscr F$. But
$$x=\hat f[k;z]=\widehat{(\varphi^{k} g)}[k;z]=\sum_{i=0}^k\widehat{(\varphi^k)}[i;z]\cdot\widehat g[k-i;z]=\frac{(\varphi^k)^{(k)}(z) g(z)}{k!}=
\varphi'(z)^kg(z)$$ and so $x\in \Z^{(1)}$. \end{proof}


\begin{theorem}\label{exact}
The sequence $0\longrightarrow \Z^{(n)}\longrightarrow
\Z^{(n+k)}\longrightarrow \Z^{(k)}\longrightarrow 0$ is exact.
\end{theorem}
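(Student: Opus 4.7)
The plan is to verify the three standard requirements for exactness: (i) the leftmost map is an embedding and the rightmost is a quotient, (ii) the composition $\pi_{n+k,k}\circ\imath_{n,n+k}$ vanishes, and (iii) $\ker(\pi_{n+k,k})\subseteq\operatorname{im}(\imath_{n,n+k})$. Items (i) and (ii) follow painlessly from what is already in place: by Proposition \ref{operators}(b), $\pi_{n+k,k}$ is an isometric quotient, hence surjective; by Proposition \ref{operators}(a), $\imath_{n,n+k}$ is bounded, and it is plainly injective as a set-theoretic padding with zeros; while the composition clearly kills everything because $\pi_{n+k,k}$ only sees the last $k$ (zero) coordinates of $\imath_{n,n+k}(x_{n-1},\dots,x_0)$.

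The real content is (iii), which amounts to the following generalization of Lemma \ref{content}: whenever $(x_{n+k-1},\dots,x_k,0,\dots,0)\in\Z^{(n+k)}$ one must show $(x_{n+k-1},\dots,x_k)\in\Z^{(n)}$. I would prove this by directly adapting the argument of Lemma \ref{content}, using Lemma \ref{phi} to supply the extra flexibility required when $n>1$. Pick $f\in\mathscr F$ realizing the list, so $\widehat f[i;z]=0$ for $i<k$ and $\widehat f[k+j;z]=x_{k+j}$ for $0\leq j<n$. Since $f$ has a zero of order $k$ at $z$, factor $f=\varphi^{k}g$ with $g\in\mathscr F$ and $\|g\|_{\mathscr F}=\|f\|_{\mathscr F}$. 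By Leibniz,
$$
x_{k+j}=\sum_{l=0}^{j}\widehat{(\varphi^{k})}[k+j-l;z]\,\widehat g[l;z]\qquad(0\leq j<n).
$$
Now apply Lemma \ref{phi} to obtain a polynomial $\psi=\sum_{i=0}^{n-1}a_{i}\varphi^{i}$ whose first $n$ Taylor coefficients at $z$ satisfy $\widehat\psi[m;z]=\widehat{(\varphi^{k})}[k+m;z]$ for $0\leq m<n$ (Lemma \ref{phi} produces a basis of the space of first-$n$ Taylor coefficients among such polynomials, so any target list is reachable). Setting $h=\psi g$ gives $h\in\mathscr F$ with $\|h\|_{\mathscr F}\leq(\sum_i|a_i|)\|g\|_{\mathscr F}$ by admissibility (b), and a second Leibniz computation yields $\widehat h[j;z]=x_{k+j}$ for $0\leq j<n$, so the truncated list is in $\Z^{(n)}$ as required.

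Once (iii) is established, the image of $\imath_{n,n+k}$ coincides with $\ker(\pi_{n+k,k})$, which is closed in $\Z^{(n+k)}$ because $\pi_{n+k,k}$ is continuous; the open mapping theorem then upgrades $\imath_{n,n+k}$ to an isomorphism onto its image, and the sequence is exact. The main obstacle is finding the right correcting factor $\psi$; the insight is simply that what needs correcting are precisely the Taylor coefficients of $\varphi^{k}$ at positions $k,\dots,k+n-1$, at which point Lemma \ref{phi} delivers $\psi$ for free and the two Leibniz expansions line up by design.
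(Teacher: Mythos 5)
Your proof is correct, but it follows a genuinely different route from the paper's. The paper deliberately avoids the general computation you carry out: it only establishes the containment $\ker(\pi_{m,1}\circ\cdots)$-type statement in the easiest case $n=1$ (Lemma~\ref{content}, where the single surviving coefficient is just $\varphi'(z)^k g(z)$, visibly an element of $\Z^{(1)}$), and then obtains the general case by induction on $m=n+k$ together with a diagram chase through the compatible $3\times 3$ diagram built from the sequences $0\to\Z^{(1)}\to\Z^{(m)}\to\Z^{(m-1)}\to 0$ and $0\to\Z^{(n-1)}\to\Z^{(m-1)}\to\Z^{(k)}\to 0$. You instead prove the general inclusion $\ker\pi_{n+k,k}\subseteq\operatorname{im}\imath_{n,n+k}$ head-on: factor $f=\varphi^k g$ using admissibility (b), and repair the discrepancy between the shifted Taylor coefficients of $f$ and those of $g$ by multiplying $g$ with a polynomial $\psi=\sum_{i<n}a_i\varphi^i$ whose first $n$ Taylor coefficients reproduce $\widehat{(\varphi^k)}[k+m;z]$, $0\le m<n$ --- and such a $\psi$ is indeed available, since the polynomials of Lemma~\ref{phi} span all possible length-$n$ coefficient lists. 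The two Leibniz expansions then match term by term, exactly as you say. What your approach buys is a direct, quantitative proof of the Corollary following Theorem~\ref{exact} (with the explicit constant $\sum_i|a_i|$ controlling the equivalence of norms that the paper declines to pursue); what the paper's approach buys is economy, since the only analysis needed is the one-coefficient computation of Lemma~\ref{content}, with the rest delegated to soft homological bookkeeping. Your closing remarks on (i), (ii) and the open mapping theorem are also in order.
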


\begin{proof} The proof proceeds by induction on $m=n+k$. The
previous lemma shows that for every $m\in \N$, the sequence
$0\longrightarrow \Z^{(1)} \longrightarrow Z^{(m)} \longrightarrow
\Z^{(m-1)}\longrightarrow 0$ is exact. By the induction hypothesis,
the sequence $0\longrightarrow \Z^{(n-1)} \longrightarrow \Z^{(m-1)}
\longrightarrow \Z^{(k)}\longrightarrow 0$ is also exact. The
compatibility of such sequences yields the commutative diagram
$$
\begin{CD}
&&0&&0\\ &&@VVV @VVV\\
&& \Z^{(1)}@= \Z^{(1)}\\
&&@VVV @VVV \\
&&\Z^{(n)}@>>>  \Z^{(m)}@>>>  \Z^{(k)}\\
&&@VVV@VVV@|\\
0@>>> \Z^{(n-1)}@>>>  \Z^{(m-1)}@>>>
\Z^{(k)}@>>>0\\
&&@VVV@VVV\\
&&0&&0
\end{CD}
$$and a simple chasing of arrows shows that the middle
sequence must also be exact.\end{proof}

\begin{corollary} If $(x_n,\dots,x_1, 0,\dots, 0)\in \Z^{(n+k)}$, then
$(x_n,\dots,x_1)\in \Z^{(n)}$.
\end{corollary}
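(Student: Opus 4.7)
The corollary is essentially an unpacking of Theorem~\ref{exact} in concrete coordinates, so the plan is extremely short: just trace an element through the exact sequence $0\to \Z^{(n)}\to \Z^{(n+k)}\to \Z^{(k)}\to 0$ and read off what exactness forces.

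Concretely, suppose $(x_n,\dots,x_1,0,\dots,0)\in \Z^{(n+k)}$. Apply the quotient map $\pi_{n+k,k}\colon\Z^{(n+k)}\to \Z^{(k)}$, which keeps only the last $k$ coordinates. Because those coordinates are all zero, the element lies in $\ker \pi_{n+k,k}$. By Theorem~\ref{exact}, $\ker \pi_{n+k,k}=\operatorname{im}\imath_{n,n+k}$, so there exists $(y_n,\dots,y_1)\in \Z^{(n)}$ with
$$
\imath_{n,n+k}(y_n,\dots,y_1)=(x_n,\dots,x_1,0,\dots,0).
$$
Since $\imath_{n,n+k}$ pads zeros on the right, matching coordinates forces $y_i=x_i$ for every $i$, and therefore $(x_n,\dots,x_1)\in \Z^{(n)}$, as desired.

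There is no real obstacle: the only ingredient beyond bookkeeping is the identification of the kernel of $\pi_{n+k,k}$ with the image of $\imath_{n,n+k}$, which is exactly the content of Theorem~\ref{exact}. One could alternatively give a ``direct'' proof by iterating Lemma~\ref{content} $k$ times (each application trading one trailing zero for a statement of the form ``the remaining tuple is itself a list of Taylor coefficients of some $g\in\mathscr F$''), but invoking exactness is cleaner and avoids repeating the factorization argument by $\varphi$.
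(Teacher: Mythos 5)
Your proof is correct and is precisely the argument the paper intends: the corollary is stated without proof as an immediate consequence of Theorem~\ref{exact}, and your reading of the trailing zeros as membership in $\ker\pi_{n+k,k}=\operatorname{im}\imath_{n,n+k}$ is exactly that consequence.
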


This implies that $\|(x_n,\dots,x_1, 0,\dots, 0)\|_{\Z^{(n+k)}}$ is
equivalent to $\|(x_n,\dots,x_1)\|_{\Z^{(n)}}$, although we will
not pursue
any bound here.\\

A new look can be paid now at Diagram (\ref{commutative}) to
exploit its form to study the splitting of the exact sequences it
contains. After Theorem \ref{exact} the diagram has become
\begin{equation}\label{poz}
\begin{CD}
&&0&&0\\ &&@VVV @VVV\\
&&\Z^{(j)}@= \Z^{(j)}\\
&&@VVV @VVV \\
 0@>>> \Z^{(n)}@>>>  \Z^{(m)}@>>>  \Z^{(k)}@>>>0\\
&&@VVV@VVV@|\\
 0@>>>\Z^{(n-j)}@>>>  \Z^{(i)}@>>>  \Z^{(k)}@>>> 0\\
&&@VVV@VVV\\
&&0&&0.
\end{CD}
\end{equation}
Thus, if the middle horizontal sequence splits, then so does the
lower one, and if the middle vertical sequence splits, then so
does the vertical sequence on the left. Putting together these two
pieces one gets the commutative diagram
$$
\begin{CD}
0@>>> \Z^{(n)}@>>>  \Z^{(n+k)}@>>>  \Z^{(k)} @>>>0 \\
&&@VVV @VVV @|\\
0@>>>\Z^{(1)}@>>>  \Z^{(k+1)}@>>>  \Z^{(k)}@>>> 0\\
&&@| @AAA@AAA\\
0@>>> \Z^{(1)}@>>> \Z^{(2)}@>>>  \Z^{(1)}@>>> 0\\
\end{CD}
$$
from which it immediately follows

\begin{corollary}\label{nonsplitting}
If the sequence $ 0\longrightarrow \Z^{(n)}\longrightarrow
\Z^{(n+k)}\longrightarrow  \Z^{(k)}\longrightarrow 0 $ is nontrivial
for $n=k=1$, then it is nontrivial for all integers $n$ and $k$.
\end{corollary}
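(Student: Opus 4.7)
My plan is to prove the contrapositive: I assume that the sequence $0\to \Z^{(n)}\to \Z^{(n+k)}\to \Z^{(k)}\to 0$ splits for some $n,k\ge 1$, and deduce that the seed sequence $0\to \Z^{(1)}\to \Z^{(2)}\to \Z^{(1)}\to 0$ must split as well. The whole argument will consist of two invocations of diagram (\ref{poz}), whose two compatibility assertions (that pushouts and pullbacks of split exact sequences remain split) have been isolated by the authors in the paragraph just before the statement. If either $n=1$ or $k=1$ the corresponding step below becomes vacuous, so I shall assume $n,k\ge 2$.

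For the first reduction I apply (\ref{poz}) with $m=n+k$ and $j=n-1$, hence $i=k+1$ and $n-j=1$; the legality condition $k<i$ holds automatically. The middle horizontal row becomes the given sequence $0\to \Z^{(n)}\to \Z^{(n+k)}\to \Z^{(k)}\to 0$, and the horizontal compatibility assertion forces the lower row $0\to \Z^{(1)}\to \Z^{(k+1)}\to \Z^{(k)}\to 0$ to split as well. For the second reduction I apply (\ref{poz}) once more, this time with the roles $n'=2$, $k'=k-1$ and $j=1$, so $m=k+1$ and $i=k$; the condition $k'<i$ reduces to $k-1<k$. Now the middle vertical column is the sequence $0\to \Z^{(1)}\to \Z^{(k+1)}\to \Z^{(k)}\to 0$ just shown to split, while the left vertical column is precisely $0\to \Z^{(1)}\to \Z^{(2)}\to \Z^{(1)}\to 0$. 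The vertical compatibility assertion now yields the splitting of the seed sequence, closing the contrapositive.

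The only real difficulty here is a piece of bookkeeping: one has to choose the parameters of the two invocations of (\ref{poz}) precisely so that the intermediate sequence $0\to \Z^{(1)}\to \Z^{(k+1)}\to \Z^{(k)}\to 0$ plays the role of lower horizontal row in the first instance and middle vertical column in the second. Stacking the two resulting diagrams reproduces exactly the three-row commutative diagram displayed immediately above the corollary, and from there the claim is an immediate diagram chase.
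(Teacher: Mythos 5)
Your proposal is correct and is essentially the paper's own argument: the authors likewise combine the horizontal reduction (middle row splits $\Rightarrow$ lower row splits, applied with $j=n-1$ to reach $0\to\Z^{(1)}\to\Z^{(k+1)}\to\Z^{(k)}\to 0$) with the vertical reduction (middle column splits $\Rightarrow$ left column splits, applied with $j=1$, $n'=2$ to reach the seed sequence), and then read off the corollary by contraposition. Your explicit parameter bookkeeping and the remark about the degenerate cases $n=1$ or $k=1$ just make precise what the paper leaves to the displayed three-row diagram.
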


\subsection{An isometric variant}
There is another form for the exact sequences $0\to \X^{(n)}\to \X^{(n+k)}\to \X^{(k)}\to 0$ which is even easier to describe in abstract terms. Consider again the quotient spaces
$$
Q^{(n)}_z= \mathscr F/\bigcap_{i<n}\ker\delta_{z}^{i}\quad\quad(n\in\N).
$$
These spaces are isometric to the corresponding $\X^{(n)}_z$ via Taylor coefficients, but we do not need this fact at this moment. Let us fix integers $n$ and $k$.
It is clear that there is a natural quotient map from $Q^{(n+k)}_z$ onto $Q^{(k)}_z$ that we shall not even label.
Less obvious is that the kernel of this map is isometric to  $\X^{(n)}_z$, although this time the isometry is not ``natural''.
To see this let us fix a conformal equivalence $\varphi:U\to\mathbb D$ having a (single) zero at $z$. (We observe that if $\phi$ is another conformal equivalence with $\phi(z)=0$, then $\phi=\lambda\varphi$, where $\lambda\in\mathbb T$; thus $\varphi$ is unique if we insist that $\varphi'(0)$ is real and positive.) Now recall that $f\in \bigcap_{i< k}\ker\delta_{z}^{i}$ if and only if there is a (necessarily unique) $g\in \mathscr F$ such that $f=\varphi^k g$ and one has $\|f\|_{\mathscr F}=\|g\|_{\mathscr F}$, by (b). It is therefore clear that the map $f\in\mathscr F\mapsto \varphi^{k}f\in\mathscr F$ induces an isometry of  $Q^{(n)}_z$ into  $Q^{(n+k)}_z$ whose range is $\ker(Q^{(n+k)}_z\to Q^{(k)}_z)$.

Thus the space
$Q^{(n+k)}_z$ is an ``isometric'' twisted sum of $Q^{(n)}_z$ and $Q^{(k)}_z$. More precisely, the short sequence
\begin{equation}\label{isometricQ}
\begin{CD}
0@>>> Q^{(n)}_z@>{\varphi^k\cdot}>> Q^{(n+k)}_z@>>> Q^{(k)}_z@>>> 0
\end{CD}
\end{equation}
is exact. We will omit from now on the base point $z$, which is understood. As before, the decompositions of a given $Q^{(m)}_z$ into twisted sum of the preceding  spaces $Q^{(n)}_z$
are all compatible in the sense that if $m=k+n=i+j$, with $k<i$, then the following diagram is commutative
$$
\begin{CD}
Q^{(j)}@= Q^{(j)}\\
@V{\varphi^{j-n}\cdot}VV @VV{\varphi^i\cdot}V \\
Q^{(n)}@>{\varphi^k\cdot}>>Q^{(m)}@>>> Q^{(k)}\\
@VVV@VVV@|\\
Q^{(j-n)}@>{\varphi^{k}\cdot}>> Q^{(i)}@>>> Q^{(k)}
\end{CD}
$$

It is interesting to compare the sequence (\ref{isometricQ}) to that appearing in Theorem~\ref{exact}. To this end, we observe that, after identifying $\mathscr X^{(m)}$ and $Q^{(m)}$ through Taylor coefficients, the operator $\mathscr X^{(n)}\to \mathscr X^{(n+k)}$ which corresponds to
$\imath_{n,n+k}: \X^{(n)}\to \X^{(n+k)}$ is just multiplication by $\phi$, where $\phi$ is the polynomial appearing in the proof of Proposition~\ref{operators}(a), that is, $\phi=\sum_{0\leq i<n+k}a_i\varphi^i$, with $\widehat \phi[i;z]=\delta_{ik}$ for $0\leq i<n+k$. Clearly, $a_i=0$ for $0\leq i<k$ and so $\phi=\varphi^k\psi$, where $\psi=\sum_{k\leq i<n+k}a_i\varphi^{i-k}$. Thus the following diagram is commmutative
$$
\begin{CD}
Q^{(n)}@>{\varphi^k\cdot}>> Q^{(n+k)}@>>> Q^{(k)}\\
@|@V\psi\cdot VV@|\\
Q^{(n)}@>{\phi\cdot}>> Q^{(n+k)}@>>> Q^{(k)}\\
@V{\widehat\cdot }VV @V{\widehat\cdot }VV @V{\widehat\cdot }VV\\
\X^{(n)}@>{\imath_{n,n+k}}>>  \X^{(n+k)}@>>>  \X^{(k)}
\end{CD}
$$
It follows from the 3-lemma (see for instance \cite[Lemma 1.1]{hiltstam}) and the open mapping theorem that multiplication by $\psi$ induces an automorphism of  $Q^{(n+k)}$ and so, in the preceding diagram, the first row is equivalent to the second one, and both are ``isomorphically equivalent'' (in the language of \cite[p. 256]{castmoreisr}) to the third one; which means that the three sequences have the same ``isomorphic" properties.

\subsection{The space $\Z^{(n+k)}$ as a twisted sum of $\Z^{(n)}$ and $\Z^{(k)}$}\label{twistedsum}

It is a part of the by now classical theory of twisted sums as
developed by Kalton (see \cite[Proposition 3.3]{kal-loc} or
\cite[Theorem 2.4]{kaltpeck}) that if $A$ and $C$ are Banach or
quasi-Banach spaces, then every short exact sequence $0\to A\to
B\to C\to 0$ arises, up to equivalence, from a quasilinear map
from $C$ to $A$. Thus, in view of Theorem~\ref{exact}, given integers $k$ and $n$, there
must be some quasilinear map $\Omega_{k,n}$ associated to the
exact sequence $0\to \Z^{(n)}\to \Z^{(n+k)}\to \Z^{(k)}\to 0$. From
an abstract point of view, the description of $\Omega_{k,n}$ is
rather easy. One fixes some (small) $\e>0$. Given
$x=(x_{k-1},\dots, x_{0})\in \Z^{(k)}$ we select (homogeneously)
$f\in \mathscr F$ such that $\|f\|\leq(1+\e)\|x\|_{\mathscr
Z_{(n)}}$ and $\widehat f[i;z]=x_{i}$ for $0\leq i<k$ and we
define $\Omega_{k,n}: \Z^{(k)}\to W^n$ by letting
\begin{equation}\label{asin}
\Omega_{k,n}(x)=(\widehat f[n+k-1;z],\dots,\widehat f[k;z]).
\end{equation}
Following the uses of the theory, the twisted sum space (sometimes
known as the derived space) is then defined by
$$
\Z^{(n)} \oplus_{\Omega_{k,n}}  \Z^{(k)}=\{(y,x)\in W^n\times W^k:
x\in  \Z^{(k)}, y-\Omega_{k,n}(x)\in \Z^{(n)} \},
$$
endowed with the quasinorm
\begin{equation}\label{quasinorm}
\|(y,x)\|_{{\Omega_{k,n}}}=\|y-\Omega_{k,x}(x)\|_{\Z^{(n)}}+\|x\|_{\Z^{(k)}}.
\end{equation}
Of course it has not yet been proved neither that $\Omega_{k,n}$
is quasilinear nor that the formula (\ref{quasinorm}) defines a
quasinorm. We may skip these steps since we have the following.

\begin{proposition}
The spaces $\Z^{(n)} \oplus_{\Omega_{k,n}} \Z^{(k)}$ and
$\Z^{(n+k)}$ are the same.
\end{proposition}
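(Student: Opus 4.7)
The plan is to show that the natural concatenation map $\Psi\colon W^n\times W^k\to W^{n+k}$, sending $((y_{n-1},\dots,y_0),(x_{k-1},\dots,x_0))$ to the tuple $(y_{n-1},\dots,y_0,x_{k-1},\dots,x_0)$, restricts to a bijection between $\Z^{(n)}\oplus_{\Omega_{k,n}}\Z^{(k)}$ and $\Z^{(n+k)}$, and that this bijection identifies the two quasi-norms up to equivalence. The two key ingredients will be Proposition~\ref{operators}(a) and the Corollary following Theorem~\ref{exact}.

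For the inclusion $\Z^{(n+k)}\subseteq\Z^{(n)}\oplus_{\Omega_{k,n}}\Z^{(k)}$, I would pick $w\in\Z^{(n+k)}$ realized by some $g\in\mathscr F$ with $\|g\|_{\mathscr F}\le(1+\e)\|w\|_{\Z^{(n+k)}}$; writing $w=\Psi(y,x)$, one has $x=\pi_{n+k,k}(w)\in\Z^{(k)}$ automatically. Let $f\in\mathscr F$ be the representative of $x$ fixed by the definition of $\Omega_{k,n}$, so that $\widehat f[i;z]=x_i$ for $0\le i<k$ and $\Omega_{k,n}(x)_j=\widehat f[k+j;z]$ for $0\le j<n$. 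The difference $g-f$ then vanishes to order $k$ at $z$, and by inspection its Taylor coefficients in positions $k,\dots,n+k-1$ are exactly the entries of $y-\Omega_{k,n}(x)$. Hence the tuple $(y-\Omega_{k,n}(x),0,\dots,0)$ is realized in $\Z^{(n+k)}$ by $g-f$, and the Corollary following Theorem~\ref{exact} yields $y-\Omega_{k,n}(x)\in\Z^{(n)}$ with a bound of the form $\|y-\Omega_{k,n}(x)\|_{\Z^{(n)}}\le C\|g-f\|_{\mathscr F}\le C(\|g\|_{\mathscr F}+\|f\|_{\mathscr F})$, itself a constant multiple of $\|w\|_{\Z^{(n+k)}}$.

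For the reverse inclusion, I would take $(y,x)\in\Z^{(n)}\oplus_{\Omega_{k,n}}\Z^{(k)}$, choose $f$ as above, and apply the construction in Proposition~\ref{operators}(a) directly to $y-\Omega_{k,n}(x)\in\Z^{(n)}$: this produces $\tilde h\in\mathscr F$ with $\widehat{\tilde h}[i;z]=0$ for $0\le i<k$ and $\widehat{\tilde h}[k+j;z]=(y-\Omega_{k,n}(x))_j$ for $0\le j<n$, with $\|\tilde h\|_{\mathscr F}$ controlled by a constant times $\|y-\Omega_{k,n}(x)\|_{\Z^{(n)}}$. Setting $g:=f+\tilde h$, one verifies $\widehat g[i;z]=x_i$ for $0\le i<k$ and $\widehat g[k+j;z]=y_j$ for $0\le j<n$, so $\Psi(y,x)\in\Z^{(n+k)}$ with $\|\Psi(y,x)\|_{\Z^{(n+k)}}$ bounded by a constant times $\|(y,x)\|_{\Omega_{k,n}}$.

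The main subtlety will be keeping the index conventions consistent (decreasing tuples, zeros padded at the low-index end by $\imath_{n,n+k}$, Taylor exponents increasing in the opposite direction), together with the fact that the inequality $\|y-\Omega_{k,n}(x)\|_{\Z^{(n)}}\le C\|(y-\Omega_{k,n}(x),0,\dots,0)\|_{\Z^{(n+k)}}$ used in the first direction is the less trivial half of the norm equivalence remarked after the Corollary: it is not delivered by Proposition~\ref{operators}(a), but follows from the open mapping theorem applied to the continuous injection $\imath_{n,n+k}\colon\Z^{(n)}\to\Z^{(n+k)}$, whose range is closed by Theorem~\ref{exact}. One may in fact bypass all constant-tracking by observing that both spaces are complete quasi-Banach spaces and that the two constructions above make $\Psi$ a continuous bijection in either direction, so its inverse is automatically bounded.
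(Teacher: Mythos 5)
Your proposal is correct and follows essentially the same route as the paper: one direction subtracts the almost-optimal representative of $x$ from a representative of the full tuple and invokes the Corollary to Theorem~\ref{exact} (equivalently, the closedness of the range of $\imath_{n,n+k}$ plus the open mapping theorem) to place $y-\Omega_{k,n}(x)$ in $\Z^{(n)}$ with control of the norm, while the other direction builds a representative of $(y,x)$ as $f+\phi\cdot g$ exactly as in Proposition~\ref{operators}(a). Your explicit remark that the constant in the first direction comes from the open mapping theorem rather than from Proposition~\ref{operators}(a) is precisely what the paper leaves implicit in the phrase ``the constant implicit in Lemma~\ref{content}.''
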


\begin{proof} Suppose $(y,x)=(y_{n-1},\dots,y_{0},x_{k-1},\dots,x_{0})\in \Z^{(n+k)}$ so that there is $F\in \mathscr F$ whose list of Taylor coefficients begins with $(y,x)$. 
Then $x\in \Z^{(k)}$ and $(\Omega_{k,n}(x),x)\in \Z^{(n+k)}$, so
$(y,x)-(\Omega_{k,n}(x),x)=(y- x,\Omega_{k,n}(x),0)$ belongs to
$\Z^{(n+k)}$ and by Lemma~\ref{content} we have
$y-\Omega_{k,n}(x)\in \Z^{(n)}$. Regarding  the involved norms, one
has
$$
\|y-\Omega_{k,x}(x)\|_{\Z^{(n)}}\leq
C(\|(y,x)\|_{\Z^{(n+k)}}-\|(\Omega_{k,n}(x),x)\|_{\Z^{(n+k)}})\leq
(C+1)\|(y,x)\|_{\Z^{(n+k)}},
$$
where $C$ is the constant implicit in Lemma~\ref{content}. Hence
$\|(y,x)\|_{\Omega_{k,n}}\leq (C+2)\|(y,x)\|_{\Z^{(n+k)}}$.

As for the other containment, suppose $(y,x)\in \Z^{(n)}
\oplus_{\Omega_{k,n}} \Z^{(k)}$, that is, $x\in \Z^{(k)}$ and
$y-\Omega_{k,n}(x)\in \Z^{(n)}$. Then if $f$ is the function
associated to $\Omega_{k,n}(x)$ as in (\ref{asin}) and
$g\in\mathscr F$ is almost optimal for $y-\Omega_{k,n}(x)\in
\Z^{(n)}$, taking $\phi$ as in Lemma~\ref{phi}, we have that
$(y,x)$ is the list of Taylor coefficients of $F=f+\phi\cdot g$,
so $(y,x)\in \Z^{(n+k)}$ and
$$
\|(y,x)\|_{\Z^{(n+k)}}\leq \| f+\phi\cdot g \|_\mathscr F\leq
(1+\e)\left( M\|y-\Omega_{k,n}(x)\|_{\Z^{(n)}}+
\|x\|_{\Z^{(k)}}\right),
$$
where $M$ is as in the proof of Proposition~\ref{operators}(a).
\end{proof}

\section{Singularity of the exact sequences of derived spaces}

Recall that an operator is said to be strictly singular if its
restriction to an infinite dimensional subspace of its domain is
never an isomorphism; and that an operator  $u: A\to B$ is
strictly cosingular if for every infinite codimensional subspace
$C$ of $B$ the composition $\pi\circ u:A\to B\to B/C$ fails to be
onto; equivalently, $u^*:B^*\to A^*$ is not an isomorphism when
restricted to any weakly* closed infinite-dimensional subspace of
$B^*$. Strictly singular operators were introduced by Kato
\cite{kato} and strictly cosingular by Pe\l czy\'nski \cite{pelc}.

An exact sequence is said to be singular when the quotient map is
strictly singular and will be called cosingular when the embedding
is strictly cosingular. We refer the reader to \cite{castmoresing,
ccs} for some steps into the theory of singular and cosingular
sequences. The Kalton-Peck sequences $0\to \ell_p\to Z_p\to
\ell_p\to 0$ are singular for all $p\in(0,\infty)$ and cosingular at least for $p\in(1,\infty)$. We
need the following result.
\begin{lemma}\label{3strict} Assume one has a commutative diagram
$$\begin{CD}
0@>>> A @>I>>  B @>Q>> C @>>> 0\\
 &&@VtVV @VVTV @| \\
0@>>> D@>>>  E@>>> C@>>>0
\end{CD}$$
with exact rows. If both $Q$ and $t$ are strictly singular then $T$ is strictly
singular.
\end{lemma}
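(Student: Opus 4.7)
The plan is to argue by contradiction: assume $T$ is not strictly singular, so there is an infinite dimensional closed subspace $M\subset B$ on which $T$ restricts to an isomorphism onto its image. The idea is then to produce inside $M$ (after a small perturbation) an infinite dimensional subspace lying in $I(A)$ on which $T$ is still bounded below, and to use commutativity of the diagram to derive a contradiction with strict singularity of $t$.

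First I would exploit strict singularity of $Q$: since $Q|_M$ is strictly singular, a standard argument (Bessaga--Pe\l czy\'nski selection) produces a normalized basic sequence $(x_n)\subset M$ with $\|Q(x_n)\|$ going to zero as fast as we like. Using that $Q$ induces an isomorphism $B/I(A)\to C$, for each $n$ we can pick $a_n\in I(A)$ with $\|x_n-a_n\|\leq K\|Q(x_n)\|$ for a fixed constant $K$. Arranging the rate of convergence to be rapid enough, the small perturbation lemma ensures that $(a_n)$ is a basic sequence equivalent to $(x_n)$, and that the closed span $N:=\overline{[a_n]}\subset I(A)$ is $(1+\delta)$-close to $\overline{[x_n]}$ via a small perturbation isomorphism. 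Since $T$ is bounded and $T$ is bounded below on $\overline{[x_n]}\subset M$, choosing $\delta$ small enough makes $T|_N$ bounded below as well, i.e.\ an isomorphism onto its image.

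Now the commutative diagram gives $T\circ I=j\circ t$, where $j:D\to E$ denotes the lower embedding. Let $A_0:=I^{-1}(N)$, an infinite dimensional closed subspace of $A$ (because $I$ is an embedding and $N\subset I(A)$). On $A_0$ we have $(j\circ t)|_{A_0}=(T\circ I)|_{A_0}=T|_N\circ I|_{A_0}$, a composition of two isomorphisms onto their images, hence bounded below. Since $j$ is continuous, this forces $t|_{A_0}$ to be bounded below too, contradicting the assumption that $t$ is strictly singular.

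The main obstacle is the perturbation step: producing a subspace of $I(A)$ on which $T$ remains bounded below. Once this reduction is done, the rest is a soft diagram chase. No result beyond the standard small perturbation lemma for basic sequences and the open mapping theorem (to get the constant $K$) is needed.
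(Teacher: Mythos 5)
Your proposal is correct and follows essentially the same route as the paper: the heart of both arguments is that, given an infinite-dimensional subspace of $B$ on which one wants to test $T$, strict singularity of $Q$ yields an infinite-dimensional subspace of $I(A)$ that is a small (compact) perturbation of a subspace of the given one, after which the identity $T\circ I=j\circ t$ and the strict singularity of $t$ finish the job. The only cosmetic differences are that you argue by contradiction and reprove by hand (via Bessaga--Pe\l czy\'nski plus the small perturbation lemma) the characterization of strictly singular quotient maps that the paper simply quotes from \cite[Proposition 3.2]{css}.
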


\begin{proof}
We need the following characterization of strictly singular
quotient maps. Let $B$ be a Banach space and $A$ a closed subspace
of $B$. Then the quotient map $Q: B\to B/A$ is strictly singular
if and only if for every infinite-dimensional subspace $B'\subset
B$ there is an infinite-dimensional $A'\subset A$ and a compact
(actually nuclear) operator $K:A'\to B$ such that $I+K$ embeds
isomorphically $A'$ into $B'$. This maybe folklore; see
\cite[Proposition 3.2]{css} for an explicit proof. A certainly
classical result establishes that an operator $t:A\to D$ is
strictly singular if given any infinite dimensional subspace
$A'\subset A$ and $\varepsilon>0$ there is a further infinite
dimensional subspace $A''\subset A'$ such that
$\|t_{|A''}\|<\varepsilon$. Both things together yield that given
$B'\subset B$ there is $A''\subset A'\subset A$ such that $I+K:
A''\to B'$ is an into isomorphism and $\|t_{|A''}\|<\varepsilon$.
There is no loss of generality assuming that
$\|K_{|A''}\|<\varepsilon$. Therefore
$\|T_{|(I+K)(A'')}\| = \|t_{|A''} + TK_{|A''} \| <(1+ \|T\|)\varepsilon.$
\end{proof}

We thus obtain the ``strictly singular counterpart" to
Corollary~\ref{nonsplitting}:

\begin{proposition}\label{singular}
If the natural quotient map $\Z^{(2)}\to \Z^{(1)}$ is strictly
singular, then so is $\Z^{(n)}\to \Z^{(k)}$ for every $n>k$.
\end{proposition}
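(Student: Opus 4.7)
The plan is to prove by induction on $k\geq 1$ the statement that, for every $n>k$, the natural quotient $\pi_{n,k}:\Z^{(n)}\to\Z^{(k)}$ is strictly singular. The main tool is Lemma~\ref{3strict}, which propagates strict singularity across a suitable commutative diagram fed by two instances of the exact sequence produced by Theorem~\ref{exact}.

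The base case $k=1$ is handled by a simple factorization: for every $n\geq 2$ one has $\pi_{n,1}=\pi_{2,1}\circ\pi_{n,2}$, and composing the strictly singular operator $\pi_{2,1}$ (the hypothesis) with the bounded operator $\pi_{n,2}$ yields again a strictly singular operator. For the inductive step $k\geq 2$, fix $n>k$ and form the commutative diagram
$$\begin{CD}
0@>>> \Z^{(n-k+1)} @>>> \Z^{(n)} @>Q>> \Z^{(k-1)} @>>> 0\\
&& @VtVV @VVTV @|\\
0@>>> \Z^{(1)} @>>> \Z^{(k)} @>>> \Z^{(k-1)} @>>> 0
\end{CD}$$
whose rows are exact by Theorem~\ref{exact} (top: $(n-k+1)+(k-1)=n$; bottom: $1+(k-1)=k$), with $Q=\pi_{n,k-1}$, $t=\pi_{n-k+1,1}$ and $T=\pi_{n,k}$ the natural quotients from Proposition~\ref{operators}. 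The inductive hypothesis applied to the pair $(n,k-1)$ (which is legal because $n>k>k-1$) gives strict singularity of $Q$, and the base case applied with exponent $n-k+1\geq 2$ gives strict singularity of $t$. Lemma~\ref{3strict} then delivers strict singularity of $T=\pi_{n,k}$, completing the induction.

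The only step that asks for explicit verification is the commutativity of the left square; this amounts to observing that, in the concrete description of $\imath$ and $\pi$ given before Proposition~\ref{operators}, embedding $(a_{n-k},\dots,a_0)\in\Z^{(n-k+1)}$ into $\Z^{(n)}$ by appending $k-1$ zeros on the right and then projecting onto the last $k$ entries yields $(a_0,0,\dots,0)\in\Z^{(k)}$, which is exactly the image of $a_0\in\Z^{(1)}$ under $\imath_{1,k}$. I do not foresee any real obstacle here; the whole argument is the strict-singularity parallel of the pullback/pushout reasoning used in the proof of Corollary~\ref{nonsplitting}.
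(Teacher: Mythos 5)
Your proof is correct and follows essentially the same strategy as the paper's: an induction on $k$ whose base case is the factorization $\pi_{n,1}=\pi_{2,1}\circ\pi_{n,2}$ and whose inductive step applies Lemma~\ref{3strict} to a commutative diagram built from two of the natural exact sequences of Theorem~\ref{exact}. The only (cosmetic) difference is the choice of diagram: the paper first reduces to the case $n=k+1$ and uses the rows with quotient $\Z^{(1)}$, whereas you keep $n$ general and use the rows with quotient $\Z^{(k-1)}$; both squares commute for the same reason and the induction is well-founded in either arrangement.
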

\begin{proof} Note that if $n>m>k$, then $ \Z^{(n)}\to \Z^{(k)}$ is $ \Z^{(n)}\to
\Z^{(m)}$ followed by $ \Z^{(m)}\to \Z^{(k)}$. As the composition of
a strictly singular operator with any operator is again strictly
singular, we have that the Proposition is trivial if $k=1$ and also
that one can assume $n=k+1$. We shall prove that $\Z^{(k+1)}\to
\Z^{(k)}$ is strictly singular by induction on $k\in\N$. There is
nothing to prove for $k=1$, so assume $k>1$. Since one has the
commutative diagram
$$
\begin{CD}
&&\Z^{(1)}@= \Z^{(1)}\\
&&@VVV @VVV \\
0@>>> \Z^{(k)}@>>> \Z^{(k+1)}@>\pi_{k+1,1}>> \Z^{(1)} @>>> 0\\
&&@V\pi_{k, k-1}VV@V \pi_{k+1,k}VV@|\\
0@>>> \Z^{(k-1)}@>>> \Z^{(k)}@>>> \Z^{(1)}@>>>0.
\end{CD}
$$
But $\pi_{k+1,1}$ is strictly singular and so is $\pi_{k,k-1}$, by the induction hypothesis. Thus, the result follows from Lemma~\ref{3strict}.
\end{proof}

We omit the proofs of the dual results:
\begin{lemma}\label{3costrict} Assume one has a commutative diagram
$$\begin{CD}
0@>>> A @>I>>  B @>Q>> C @>>> 0\\
 &&@| @AATA @AAtA  \\
0@>>> A@>>>  D@>>> E@>>>0
\end{CD}$$
with exact rows. If both $I$ and $t$ are strictly cosingular then $T$ is strictly
cosingular.
\end{lemma}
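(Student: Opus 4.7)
The plan is to dualize the diagram and reduce to Lemma~\ref{3strict} applied in the dual category. Applying $(-)^*$ reverses all arrows and preserves exactness of the rows (the adjoint of a short exact sequence of Banach spaces is short exact), so we obtain the commutative diagram with exact rows
$$
\begin{CD}
0@>>> C^* @>Q^*>>  B^* @>I^*>> A^* @>>> 0\\
 &&@Vt^*VV @VT^*VV @| \\
0@>>> E^*@>>>  D^*@>>> A^*@>>>0
\end{CD}
$$
which has exactly the shape of the diagram in Lemma~\ref{3strict}, with $I^*$ in the role of the quotient $Q$ and $t^*$ in the role of the vertical map $t$.

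By the equivalent formulation of strict cosingularity recalled at the beginning of this section, the hypotheses translate into the statement that neither $I^*$ nor $t^*$ is an isomorphism on any weak$^*$-closed infinite-dimensional subspace of its domain; and the conclusion we want is the analogous statement for $T^*$. I would therefore rerun the proof of Lemma~\ref{3strict} on the dualized diagram, replacing throughout ``infinite-dimensional subspace'' by ``weak$^*$-closed infinite-dimensional subspace''.

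The hard part is establishing the weak$^*$-closed refinements of the two inputs used in the proof of Lemma~\ref{3strict}: namely (i) a quotient map fails the ``weak$^*$-strict-singularity'' property iff for some weak$^*$-closed infinite-dimensional $Z$ in the codomain there is a weak$^*$-closed infinite-dimensional $Y$ in the kernel and a compact operator $K\colon Y\to B^*$ with $(\mathrm{id}+K)(Y)\subset Z$; and (ii) any weak$^*$-strictly-singular operator admits, inside each weak$^*$-closed infinite-dimensional subspace of its domain, a further weak$^*$-closed infinite-dimensional subspace on which its norm is arbitrarily small. Both are obtained by performing the standard gliding-hump extraction against weak$^*$-null sequences, whose weak$^*$-closed linear spans remain weak$^*$-closed (via Banach--Dieudonn\'e applied to bounded weak$^*$-null sequences). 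Once (i) and (ii) are in hand, the final estimate $\|T^*|_{(\mathrm{id}+K)(Y'')}\|<(1+\|T^*\|)\varepsilon$ transcribes verbatim from the proof of Lemma~\ref{3strict}, yielding that $T^*$ is not an isomorphism on the weak$^*$-closed infinite-dimensional subspace so produced, and hence that $T$ is strictly cosingular.
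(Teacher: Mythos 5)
The paper does not actually prove this lemma (``We omit the proofs of the dual results''), and your plan --- dualize the diagram and rerun the proof of Lemma~\ref{3strict} with ``weak$^*$-closed infinite-dimensional subspace'' in place of ``infinite-dimensional subspace'' --- is surely the intended route; your dualized diagram is correct and the reduction to a weak$^*$ version of Lemma~\ref{3strict} is the right idea. The gap is in the step you yourself flag as the hard part, and the justification you sketch does not close it. In the dualized argument, the subspace $Y$ that your ingredient (i) must produce inside $\ker I^*=Q^*(C^*)\cong C^*$ has to be \emph{weak$^*$-closed}: the only hypothesis available on $t^*$ is that it fails to be an isomorphism on weak$^*$-closed infinite-dimensional subspaces of $C^*$, and it says nothing about subspaces that are merely norm-closed. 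But the gliding-hump construction behind \cite[Proposition 3.2]{css} produces $Y$ as the norm-closed linear span of a (perturbed) basic sequence, and such a span in a dual space is in general \emph{not} weak$^*$-closed: the unit vectors of $\ell_\infty=(\ell_1)^*$ form a bounded weak$^*$-null sequence whose closed span is $c_0$, which is weak$^*$-dense and proper, hence not weak$^*$-closed. Banach--Dieudonn\'e is a criterion for weak$^*$-closedness, not a guarantee of it, so the parenthetical appeal to it does not repair this. (Your ingredient (ii) is less problematic: Mazur's selection can be run with functionals taken from the predual, so the intermediate finite-codimensional intersections stay weak$^*$-closed, and the final span need not be weak$^*$-closed since one only has to show that $T^*$ fails to be bounded below on the given weak$^*$-closed $V$. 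Note also that your statement of (i) is garbled: the subspace $Z$ lives in the domain $B^*$ of the quotient map $I^*$, not its codomain, and the characterization should read ``for every such $Z$ there exist $Y$ and $K$'', not as an existence criterion for failure.)

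To make the argument rigorous you must either prove a genuinely weak$^*$ analogue of \cite[Proposition 3.2]{css} (producing a weak$^*$-closed $Y$), or avoid the dual space altogether and argue in the primal category with quotients: if $M\subset B$ is closed, infinite-codimensional and $\pi_M\circ T$ is onto, then passing to the quotient by $I(A)$ shows that $\pi_{\overline{Q(M)}}\circ t:E\to C/\overline{Q(M)}$ is onto, so cosingularity of $t$ forces $\overline{Q(M)}$ to have finite codimension in $C$, and the remaining case is excluded using the cosingularity of $I$ together with the Vladimirskii-type refinement that a strictly cosingular operator has small norm composed with the quotient by some larger, still infinite-codimensional subspace. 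That refinement is precisely the primal reading of your ingredient (ii) and is a substantive theorem, not a verbatim transcription of the strictly singular case. As it stands, your proposal is the right strategy with an unproved key step.
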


\begin{corollary}\label{cosingular}
If the inclusion map $\Z^{(1)}\to \Z^{(2)}$ is strictly cosingular,
then so is $\Z^{(k)}\to \Z^{(n)}$ for every $k<n$.
\end{corollary}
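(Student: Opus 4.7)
The plan is to dualize the proof of Proposition \ref{singular} step by step, invoking Lemma \ref{3costrict} in place of Lemma \ref{3strict} and reversing the direction of the vertical arrows. As a preliminary reduction, any inclusion $\imath_{k,n}$ factors as a telescope $\imath_{n-1,n}\circ\imath_{n-2,n-1}\circ\cdots\circ\imath_{k,k+1}$ of one-step inclusions, so because strictly cosingular operators form a two-sided operator ideal, it suffices to prove that $\imath_{m,m+1}:\Z^{(m)}\to\Z^{(m+1)}$ is strictly cosingular for every $m\geq 1$.

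I would prove this by induction on $m$. The base case $m=1$ is the hypothesis of the corollary. For the inductive step, fix $m\geq 2$ and assume $\imath_{m-1,m}$ is strictly cosingular. Consider the diagram (whose rows, prolonged by zeros, are the exact sequences of Theorem \ref{exact})
\[
\begin{CD}
\Z^{(1)} @>>> \Z^{(m+1)} @>>> \Z^{(m)}\\
@| @AAA @AAA\\
\Z^{(1)} @>>> \Z^{(m)} @>>> \Z^{(m-1)}
\end{CD}
\]
in which the middle upward arrow is $\imath_{m,m+1}$ and the right upward arrow is $\imath_{m-1,m}$. Commutativity of both squares is routine from the explicit formulas for the inclusions (padding zeros on the right) and for the quotient maps (discarding the leftmost coordinate) that form these particular exact sequences.

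To apply Lemma \ref{3costrict} one needs the two strict-cosingularity hypotheses: (i) the top-row embedding $\imath_{1,m+1}$ and (ii) the rightmost upward map $\imath_{m-1,m}$. Item (ii) is the induction hypothesis, and (i) follows by writing $\imath_{1,m+1}=\imath_{2,m+1}\circ\imath_{1,2}$ and using the base case together with the ideal property. The lemma then delivers strict cosingularity of the middle vertical, which is precisely $\imath_{m,m+1}$, closing the induction.

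The only real point to watch is the operator-ideal property for strictly cosingular maps, used twice above. Concretely, if $u:A\to B$ is strictly cosingular and $v:B\to C$ is arbitrary, then for any infinite-codimensional closed $M\subset C$, either $v^{-1}(M)=B$, in which case $vu$ trivially fails to surject onto $C/M$, or else the induced embedding $B/v^{-1}(M)\hookrightarrow C/M$ shows $v^{-1}(M)$ is infinite-codimensional in $B$, whence strict cosingularity of $u$ prevents $u$ from surjecting onto $B/v^{-1}(M)$ and hence $vu$ from surjecting onto $C/M$. With this in hand, the remainder of the argument is a faithful mirror of Proposition \ref{singular}.
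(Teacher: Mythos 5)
Your argument is correct and is precisely the dualization of Proposition~\ref{singular} that the paper intends when it ``omits the proofs of the dual results'': the telescope reduction to one-step inclusions, the induction on $m$, and the application of Lemma~\ref{3costrict} to the commutative ladder all match the mirror of the singular case. One small caveat: in your justification of the ideal property, the induced injection $B/v^{-1}(M)\hookrightarrow C/M$ does not by itself show that $v^{-1}(M)$ is infinite-codimensional (it could be a proper subspace of finite codimension, in which case one should instead note that $\pi_M\circ v\circ u$ then has finite-dimensional range and so cannot be onto the infinite-dimensional $C/M$); since the two-sided ideal property of strictly cosingular operators is classical (Pe{\l}czy\'nski), this slip does not affect the proof.
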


\section{Applications to Hilbert spaces}

\subsection{The quasi-linear map associated to twisted Kalton-Peck spaces }\label{KPR}

Some results in this section are, essentially, in
\cite[Section~6.B]{rochberg}. Let us consider the following
variation of the Calder\'on space associated to the Banach couple
$(\ell_\infty, \ell_1)$ which is designed to simplify the
computation of extremals. Take $U=\mathbb S=\{z\in \mathbb C:
0<\Re z< 1\}$, with $W=\ell_\infty$, and let $\mathscr F$ be the
space of analytic functions $F:\mathbb S\to\ell_\infty$ having the
following properties:
\begin{enumerate}
\item $F$ extends to a $\sigma(\ell_\infty,\ell_1)$ continuous function on $\overline{\mathbb S}$ that we denote again by $F$.
\item $\|F\|_\mathscr F=\sup\{\|F(it)\|_\infty,\|F(1+it)\|_1:t\in\mathbb R\}<\infty$.
\end{enumerate}
Let $(\mathscr Z_z)_{z\in\mathbb S}$ denote the analytic family induced by $\mathscr F$.
Then of course $\mathscr Z_z=[\ell_\infty,\ell_1]_{\theta}=\ell_p$, where $\theta=\Re z$ and $p=1/\theta$ for $\theta\in(0,1)$ and, in particular $\mathscr Z_z=\ell_2$ for $z=1/2$. In the remainder of this Section we fix $z=1/2$ as the base point.

If $x$ is normalized in $\ell_2$, then $F_x(z)=u|x|^{2z}$ is normalized in $\mathscr F$ (although it does not belong to $\mathscr C(\ell_\infty,\ell_1)$ in general) and one has $F_x({1\over 2})=x$, where $x=u|x|$ is the ``polar decomposition'' of $x$. Now
$$
F_x=u|x||x|^{2z-1}=x|x|^{2(z-1/2)}=x\sum_{n=0}^\infty\frac{2^n\log^n|x|}{n!}\left(z-\tfrac{1}{2}\right)^n,
$$
and
$$
\widehat F_x[n, \tfrac{1}{2}]=\frac{2^n x\log^n|x|}{n!},
$$
if $\|x\|_2=1$. For arbitrary $x\in\ell_2$ we have, by homogeneity,
\begin{equation}\label{taylors}
\widehat F_x[n, \tfrac{1}{2}]=\frac{2^n x}{n!}\log^n\left(\frac{|x|}{\|x\|_2}\right).
\end{equation}
In particular,
$$
\begin{aligned}
\Omega_{1,n}(x)&=(F_x[n-1, \tfrac{1}{2}],\dots, F_x[1, \tfrac{1}{2}])\\
& = x\left(\frac{2^{n-1}}{(n-1)!}\log^{n-1}\left(\frac{|x|}{\|x\|_2}\right),\dots,  2\log^2\left(\frac{|x|}{\|x\|_2}\right), 2\log\left(\frac{|x|}{\|x\|_2}\right) \right)\\
\end{aligned}
$$
which allows us to describe the corresponding spaces $\mathscr Z^{(n)}$ for small $n$ as follows. First, we have
$$
\mathscr Z^{(2)}\approx \ell_2\oplus_{\Omega_{1,1}}\ell_2=\{(y,x): \|y-2x\log(|x|/\|x\|_2)\|_2+\|x\|_2 <\infty\},
$$
which is well isomorphic to Kalton-Peck $Z_2$ space \cite[Section~6]{kaltpeck}. Also,
$$
\mathscr Z^{(3)}\approx \mathscr Z^{(2)} \oplus_{\Omega_{1,2}} \ell_2 \approx \left( \ell_2\oplus_{\Omega_{1,1}}\ell_2  \right) \oplus_{\Omega_{1,2}} \ell_2,
$$
and the norm of $
\mathscr Z^{(3)}$ is equivalent to
\begin{equation}\label{z3}
\|(z,y,x)\|_{\Omega_{1,2}}= \left\| \left(z- 2x\log^2\left(\frac{|x|}{\|x\|_2}\right), y-2x\log\left(\frac{|x|}{\|x\|_2}\right) \right)\right\|_{\Omega_{1,1}}+ \|x\|_2.
\end{equation}

We will also finally display the quasilinear map $\Omega_{2,2}$
that allows one to represent $\mathscr Z^{(4)}$ as a twisted sum
of $\mathscr Z^{(2)}$ with itself. After all, this was the
starting point of this research. Let $\varphi:\mathbb S\to\mathbb
D$ be conformal equivalence vanishing at $z_0=1/2$ and let
$\phi=\sum_{1\leq i\leq 3}a_i\varphi^i$ be such that
$\widehat\phi[i;z={1\over 2}]=\delta_{1i}$ for $0\leq i\leq 3$.
Given $(y,x)\in \mathscr Z^{(2)}$ we construct an allowable
$F_{(y,x)}\in \mathscr F$ as follows. Let $F_x$ and
$F_{(y-\Omega(x))}$ be extremals for $x$ and $y-\Omega(x)$,
respectively, where $\Omega(x)=\Omega_{1,1}(x)=F_x'({1\over
2})=2x\log(|x|/\|x\|_2)$. Put
$$
G=\phi\cdot F_{(y-\Omega(x))}+ F_x.
$$
Then $G({1\over 2})=x, G'({1\over 2})= y$ and $\|G\|_\mathscr
F\leq  \|\phi\cdot F_{(y-\Omega(x))}\|_\mathscr F+
\|F_x\|_\mathscr F\leq \|\phi\|_\infty
(\|y-\Omega(x)\|_2+\|x\|_2)$, where $\|\phi\|_\infty\leq
|a_1|+|a_2|+|a_3|$ and we may define
$$
\Omega_{2,2}(y,x)=\left(\widehat F_{(y,x)}[3;\tfrac{1}{2}],
\widehat F_{(y,x)}[2;\tfrac{1}{2}]\right).
$$
By the construction of $\phi$ we have $\widehat
F_{(y,x)}[2;{1\over 2}]= \widehat F_{(y-\Omega(x))}[1;{1\over 2}]+
\widehat F_{x}[2;{1\over 2}]$ and $\widehat F_{(y,x)}[3;{1\over
2}]= \widehat F_{(y-\Omega(x))}[2;{1\over 2}]+ \widehat
F_{x}[3;{1\over 2}]$ and thus
$$
\Omega_{2,2}(y,x)=2\left((y-\Omega x)\log^2\frac{|y-\Omega x|}{\|y-\Omega x\|_2}+
\frac{2x}{3}\log^3\frac{|x|}{\|x\|_2}     , (y-\Omega
x)\log\frac{|y-\Omega x|}{\|y-\Omega x\|_2}+
{x}\log^2\frac{|x|}{\|x\|_2}      \right).
$$

\subsection{The 3-space problem for twisted Hilbert spaces}
We are now ready for the first concrete application. Recall that a
twisted Hilbert space is a twisted sum of Hilbert spaces.

\begin{proposition}
The space $\mathscr Z^{(n)}$ is a twisted Hilbert space if and
only if $n=1,2$.
\end{proposition}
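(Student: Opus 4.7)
The ``if'' direction is immediate: $\mathscr Z^{(1)}=\ell_2$ is itself a Hilbert space, and $\mathscr Z^{(2)}\approx Z_2$ is the Kalton--Peck twisted Hilbert space as identified in Section~\ref{KPR}, fitting in the exact sequence $0\to\ell_2\to Z_2\to\ell_2\to 0$ obtained from Theorem~\ref{exact} with $n=k=1$.

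For the ``only if'' direction with $n\ge 3$, my plan is to argue by contradiction, pitting the partial-sum (equivalently, Rademacher) growth of the natural unit vectors of $\mathscr Z^{(n)}$ against what is allowed in a twisted Hilbert space. First I would compute the growth inside $\mathscr Z^{(n)}$. Setting $x_N=e_1+\cdots+e_N\in\ell_2$, so that $\log(|x_N|/\|x_N\|_2)=-\tfrac12\log N$ on the support, an induction on $n$ based on the recursion $\mathscr Z^{(n)}\approx\mathscr Z^{(n-1)}\oplus_{\Omega_{1,n-1}}\ell_2$ and the explicit formulas for $\Omega_{1,n-1}$ from Section~\ref{KPR} should give
$$
\bigl\|(0,\dots,0,x_N)\bigr\|_{\mathscr Z^{(n)}}\asymp N^{1/2}(\log N)^{n-1}.
$$
In particular the unit vectors $v_i=(0,\dots,0,e_i)\in\mathscr Z^{(n)}$ have $\|v_i\|=1$ but $\bigl\|\sum_{i\le N}v_i\bigr\|\asymp N^{1/2}(\log N)^{n-1}$; each of the $n-1$ derivative ``layers'' contributes one additional factor of $\log N$.

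The principal obstacle is the complementary upper bound. Suppose $\mathscr Z^{(n)}$ carries a twisted Hilbert structure $0\to H_1\to X\to H_2\to 0$ with associated centralizer $\Omega\colon H_2\to H_1$. Writing a normalized sequence as $x_i=(a_i,b_i)$, one has $\|b_i\|_{H_2}\le 1$ and $\|a_i-\Omega(b_i)\|_{H_1}\le 1$; the Hilbertian parallelogram law then controls $\|\sum\varepsilon_i b_i\|_{H_2}$ and $\|\sum\varepsilon_i(a_i-\Omega(b_i))\|_{H_1}$ by $N^{1/2}$, and the whole argument reduces to a ``single-log'' centralizer estimate $\bigl\|\Omega(\sum\varepsilon_i b_i)-\sum\varepsilon_i\Omega(b_i)\bigr\|_{H_1}\lesssim N^{1/2}\log N$, valid for every centralizer on Hilbert space. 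Obtaining this bound is the delicate step; I would aim to derive it from Kalton's structural results on centralizers on $\ell_2$, which say that, in the relevant asymptotic regime, every such centralizer is dominated by a multiple of the Kalton--Peck centralizer $\Omega_{KP}(x)=x\log(|x|/\|x\|_2)$. Once this is in place, the resulting bound $\mathbb{E}\,\bigl\|\sum_{i\le N}\varepsilon_i x_i\bigr\|_X\lesssim N^{1/2}\log N$ contradicts the $(\log N)^{n-1}$ growth exhibited above for $n\ge 3$, ruling out any isomorphism between $\mathscr Z^{(n)}$ and a twisted Hilbert space.
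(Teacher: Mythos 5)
Your overall strategy is the one the paper uses: compute the Rademacher/partial-sum growth of the bottom-layer unit vectors $v_i=(0,\dots,0,e_i)$ via the explicit formulas of Section~\ref{KPR} (for $n=3$ the paper gets $\|\sum_{i\le N}\pm v_i\|\asymp N^{1/2}\log^2N$ from the quasinorm (\ref{z3}), matching your $N^{1/2}(\log N)^{n-1}$), and then contradict a logarithmic upper bound that must hold in every twisted Hilbert space. The gap is precisely in the step you yourself flag as delicate. First, a twisted Hilbert space is induced by an arbitrary quasilinear map $\ell_2\to\ell_2$, not by a centralizer, so reducing the problem to a ``centralizer estimate'' is not legitimate; and the structural claim you lean on --- that every centralizer on $\ell_2$ is dominated by a multiple of the Kalton--Peck map --- is not a theorem (and would in any case not survive the isomorphism carrying the hypothetical twisted Hilbert structure onto $\mathscr Z^{(n)}$). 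Second, the naive dyadic estimate for a quasilinear map does not by itself yield the $N^{1/2}\log N$ bound, so this step cannot be waved through.

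The correct way to close the argument, and the one the paper takes, is to phrase the upper bound as a statement about cotype-$2$ constants: by \cite[Theorem 6.2(a)]{kaltpeck}, any twisted sum of two spaces of cotype $2$ has $n$-th cotype-$2$ constant $a_{n,2}\le K(1+\log n)$, which is exactly the bound
$$\left(\int_0^1\Bigl\|\sum_{i\le N}r_i(t)x_i\Bigr\|^2dt\right)^{1/2}\le K(1+\log N)\,N^{1/2}$$
for normalized $x_i$, is an isomorphic invariant, and requires no structure theory of the underlying quasilinear map. Substituting that citation for your domination claim, your computation of the lower bound for $n\ge3$ finishes the proof; the ``if'' direction you give is the same as the paper's.
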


\begin{proof}
The $n$-th cotype 2 constant $a_{n,2}(X)$ of a (quasi-) Banach space $X$ is defined as the infimum of those $C$ such that
for every $x_1, \dots x_n\in X$ one has
$$\left ( \int_0^1 \left\|\sum_{i=1}^n r_i(t) x_i \right\|^2 dt \right)^{1/2} \leq
C \left(\sum_{i=1}^n \|x_i\|^2 \right)^{1/2},$$
where $(r_n)$ is the sequence of Rademacher functions.

To prove that $\mathscr Z^{(3)}$ does not embed in any
twisted Hilbert space we will work with the equivalent
quasinorm given by (\ref{z3}). Let $(e_i)$ be the unit basis of
$\ell_2$ and take $x_i = (0, 0,e_i)$. These are normalized
vectors, which makes $(\sum_{i=1}^n\|x_i\|^2)^{1/2} = \sqrt{n}$.
On the other hand
$$\left\|\sum_{i=1}^n \pm x_i \right\|_{\Omega_{1,2}} = \sqrt{n}(1+ \log^2n).$$ Hence the cotype 2
constants of $\mathscr Z^{(3)}$ cannot verify $a_{n,2}
\leq K \log n$. And this estimate must hold in any twisted Hilbert
space by \cite[Theorem 6.2.(a)]{kaltpeck}.
\end{proof}

\begin{corollary} ``To be a twisted Hilbert space'' is not a 3-space
property.\end{corollary}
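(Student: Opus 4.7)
The plan is to exhibit a single short exact sequence $0\to A\to B\to C\to 0$ in which both $A$ and $C$ are twisted Hilbert spaces while $B$ is not, and then invoke the definition of 3-space property. The preceding Proposition tells us precisely which $\mathscr Z^{(n)}$ are twisted Hilbert spaces, namely $n=1$ and $n=2$: the space $\mathscr Z^{(1)}=\ell_2$ is trivially a twisted Hilbert space, and $\mathscr Z^{(2)}$ is isomorphic to the Kalton--Peck space $Z_2$, which is a nontrivial twisted sum of two copies of $\ell_2$. By the same Proposition, $\mathscr Z^{(n)}$ for $n\ge 3$ fails to embed in any twisted Hilbert space and, in particular, cannot be a twisted Hilbert space itself.

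Now Theorem~\ref{exact} applied with $n=k=2$ produces the exact sequence
\[
0\longrightarrow \mathscr Z^{(2)}\longrightarrow \mathscr Z^{(4)}\longrightarrow \mathscr Z^{(2)}\longrightarrow 0,
\]
whose outer terms are both (isomorphic to) the twisted Hilbert space $Z_2$, but whose middle term $\mathscr Z^{(4)}$ is not a twisted Hilbert space by the preceding Proposition. (Alternatively one can use $0\to \mathscr Z^{(1)}\to \mathscr Z^{(3)}\to \mathscr Z^{(2)}\to 0$ or $0\to \mathscr Z^{(2)}\to \mathscr Z^{(3)}\to \mathscr Z^{(1)}\to 0$ given by the same Theorem, whose outer terms $\ell_2$ and $Z_2$ are twisted Hilbert spaces while $\mathscr Z^{(3)}$ is not.) This witnesses that the class of twisted Hilbert spaces is not stable under forming twisted sums, hence it is not a 3-space property.

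There is essentially no obstacle: the corollary is a direct reading of the previous Proposition against Theorem~\ref{exact}. The only mild subtlety worth a sentence is to note that $\mathscr Z^{(2)}$ really is a twisted Hilbert space in the sense required (an extension of one Hilbert space by another), which is the content of the identification $\mathscr Z^{(2)}\approx Z_2$ recorded in Subsection~\ref{KPR}.
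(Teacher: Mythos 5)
Your argument is correct and is exactly the one the paper intends: the Corollary is stated without proof precisely because it follows by reading the preceding Proposition against Theorem~\ref{exact}, e.g.\ via the sequence $0\to \mathscr Z^{(1)}\to \mathscr Z^{(3)}\to \mathscr Z^{(2)}\to 0$ (or your $\mathscr Z^{(4)}$ variant), whose outer terms are twisted Hilbert spaces while the middle term is not. No issues.
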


The corollary answers a question posed to us by David Yost long
time ago \cite[p. 95]{castgonz} and considered by the first author
in \cite{cabetwisted} where it was shown that ``to be a subspace
of a twisted Hilbert space'' is not a 3-space property. Since
$\mathscr Z^{(n)}$ is isomorphic to its dual (see
\cite[Section~4]{rochberg}) and the dual of any twisted Hilbert
space is again a twisted Hilbert space, we see that $\mathscr
Z^{(n)}$ is a quotient of a twisted Hilbert space if and only if
$n=1,2$.

Thus, in the situation described in Section~\ref{KPR}, recall that
for $\mathscr F=\mathscr F(\ell_\infty,\ell_1)$ one gets $\mathscr
Z^{(1)}=\ell_2$ and   $\mathscr Z^{(2)}$ is isomorphic to
Kalton-Peck's space $Z_2$ and, actually, the extension
$0\to\ell_2\to \mathscr Z^{(2)}\to\ell_2\to 0$ is isomorphically
(and even ``projectively'' cf. \cite{kaltpeck}) equivalent to
Kalton-Peck's sequence $0\to \ell_2\to Z_2\to \ell_2\to0$, which
has strictly singular quotient map and strictly cosingular
inclusion (see \cite[Theorem 6.4]{kaltpeck}). One therefore has.

\begin{proposition} The exact sequences $0\to\mathscr
Z^{(k)}\to \mathscr Z^{(n+k)}\to\mathscr Z^{(n)}\to0$ are singular
and cosingular, for all integers $n,k$. \hfill$\square$
\end{proposition}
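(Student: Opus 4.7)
The plan is to combine the identification of the base case $n=k=1$ made in the paragraph immediately preceding the statement with the two propagation results already at our disposal. Essentially, there is almost nothing new to prove: the substance lies in the base case, which is read off from \cite{kaltpeck}, and the induction has been carried out abstractly in Proposition~\ref{singular} and Corollary~\ref{cosingular}.

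First, I would explicitly invoke the identification in Section~\ref{KPR}: for $\mathscr F=\mathscr F(\ell_\infty,\ell_1)$ at the base point $z=\tfrac12$, one has $\mathscr Z^{(1)}=\ell_2$ and the sequence $0\to\mathscr Z^{(1)}\to\mathscr Z^{(2)}\to\mathscr Z^{(1)}\to 0$ is projectively (hence isomorphically) equivalent to the classical Kalton-Peck sequence $0\to\ell_2\to Z_2\to\ell_2\to 0$. By \cite[Theorem~6.4]{kaltpeck} the latter has strictly singular quotient map and strictly cosingular embedding, and these properties are invariant under equivalence of extensions. Thus the base case $n=k=1$ is both singular and cosingular.

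Second, I would feed the strict singularity of the quotient $\mathscr Z^{(2)}\to\mathscr Z^{(1)}$ into Proposition~\ref{singular}, applied with $(n+k,n)$ in place of $(n,k)$, to obtain the strict singularity of the quotient map $\pi_{n+k,n}:\mathscr Z^{(n+k)}\to\mathscr Z^{(n)}$ appearing in the general sequence. Dually, feeding the strict cosingularity of $\mathscr Z^{(1)}\to\mathscr Z^{(2)}$ into Corollary~\ref{cosingular} yields the strict cosingularity of the embedding $\imath_{k,n+k}:\mathscr Z^{(k)}\to\mathscr Z^{(n+k)}$.

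There is no serious obstacle here. The only point to sanity-check is that the quotient and inclusion appearing in the exact sequence of the Proposition are literally the canonical maps $\pi_{n+k,n}$ and $\imath_{k,n+k}$ handled by Proposition~\ref{singular} and Corollary~\ref{cosingular}, which is immediate from the conventions on $\imath_{n,m}$ and $\pi_{m,k}$ fixed at the beginning of Section~\ref{maps}.
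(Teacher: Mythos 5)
Your proposal is correct and is precisely the argument the paper intends: the $\square$ after the statement signals that the proof is the paragraph immediately preceding it, which identifies the base case with the Kalton--Peck sequence (singular and cosingular by \cite[Theorem 6.4]{kaltpeck}) and then appeals to Proposition~\ref{singular} and Corollary~\ref{cosingular}. Your bookkeeping of which canonical maps are the quotient and the inclusion is also accurate, so there is nothing to add.
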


As a direct application we get:

\begin{proposition}\label{nocomp2} If $k>1$ the space $\mathscr Z^{(k)}$ does not contain complemented
copies of $\ell_2$.
\end{proposition}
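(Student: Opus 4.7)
I plan to argue by induction on $k \geq 2$. The base case $k = 2$ will rely on the classical Kalton--Peck result that $Z_2 \cong \mathscr Z^{(2)}$ contains no complemented copy of $\ell_2$; see \cite{kaltpeck}. The inductive step will reduce level $k$ to level $k-1$: for $k \geq 3$, if $\mathscr Z^{(k-1)}$ contains no complemented $\ell_2$, then neither does $\mathscr Z^{(k)}$.

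For the inductive step, suppose toward a contradiction that $H \cong \ell_2$ is complemented in $\mathscr Z^{(k)}$. The tool is the exact sequence
$$ 0 \to \mathscr Z^{(k-1)} \to \mathscr Z^{(k)} \stackrel{q}{\to} \ell_2 \to 0, $$
whose quotient map $q$ is strictly singular by the preceding proposition. Applying to the infinite-dimensional subspace $H$ the characterization of strictly singular quotient maps used already in the proof of Lemma~\ref{3strict} (namely \cite[Proposition 3.2]{css}), I obtain an infinite-dimensional closed subspace $A \subset \mathscr Z^{(k-1)}$ and a compact operator $K \colon A \to \mathscr Z^{(k)}$ such that $j + K \colon A \to \mathscr Z^{(k)}$ is an isomorphic embedding with range inside $H$, where $j$ denotes the inclusion $A \hookrightarrow \mathscr Z^{(k)}$. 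Since $(j + K)(A)$ is a closed infinite-dimensional subspace of $H \cong \ell_2$, it is automatically complemented in $H$, hence in $\mathscr Z^{(k)}$, and $A$ itself is isomorphic to $\ell_2$.

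Because $j$ and $j + K$ differ by the compact operator $K$, the standard perturbation fact---compact perturbations of isomorphic embeddings onto complemented subspaces remain of the same type after restriction to a finite-codimensional subspace of the domain---provides a finite-codimensional $A_0 \subset A$ on which the inclusion $A_0 \hookrightarrow \mathscr Z^{(k)}$ is an isomorphism onto a complemented subspace of $\mathscr Z^{(k)}$. Restricting any projection onto $A_0$ to $\mathscr Z^{(k-1)}$ yields a projection of $\mathscr Z^{(k-1)}$ onto $A_0$; and as a finite-codimensional subspace of $A \cong \ell_2$, $A_0$ is itself isomorphic to $\ell_2$, contradicting the inductive hypothesis. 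The genuine obstacle is really the base case: the inductive mechanism by itself would only reduce the claim to the trivially true statement that $\mathscr Z^{(1)} = \ell_2$ contains a complemented copy of $\ell_2$, so one needs the nontrivial Kalton--Peck theorem on $Z_2$ to anchor the recursion.
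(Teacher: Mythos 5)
Your argument is correct and follows essentially the same route as the paper: an induction anchored at the Kalton--Peck theorem for $Z_2$, the strict singularity of the quotient map $\mathscr Z^{(k)}\to \mathscr Z^{(1)}$, and the characterization of strictly singular quotient maps from \cite[Proposition 3.2]{css} to pull the complemented copy of $\ell_2$ down into $\mathscr Z^{(k-1)}$. The only divergence is the final perturbation step: the paper takes $K$ nuclear of nuclear norm less than $1$, extends it to a nuclear operator $N$ on all of $\mathscr Z^{(k)}$ and conjugates the projection by the invertible operator ${\bf 1}-N$, whereas you invoke the (equally standard, and correct) Fredholm-type fact that composing a left inverse of $j+K$ with $j$ gives a compact perturbation of the identity, so that $j$ itself embeds a finite-codimensional subspace $A_0$ onto a complemented subspace.
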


\begin{proof}
By \cite[Corollary 6.7]{kaltpeck} $\mathscr Z^{(2)} = Z_2$ has no
complemented subspaces isomorphic to $\ell_2$. Now, if one has an
exact sequence
$$\begin{CD}
0@>>> A @>I>> B@>Q>> C @>>>0.\end{CD}
$$ with $Q$ strictly singular and $A$ not containing $\ell_2$ complemented then $B$ does not contain $\ell_2$ complemented:
assume otherwise that $B$ has a subspace $B'$ which is isomorphic to $\ell_2$ and is complemented in $B$ through a projection $P$. (Without loss of generality we may assume that $A=\ker Q$ and $I$ is the inclusion map.)
Since $Q$ is strictly singular, there exist an infinite dimensional subspace $A'\subset A$ an a nuclear operator $K:A'\to B$ such that $I-K: A'\to B'$ is an embedding. Passing to a further subspace if necessary we may assume the nuclear norm of $K$ is strictly less than 1.
Let $N$ be a  nuclear endomorphism of $B$ extending $K$ and having the same nuclear norm as $K$. Then $\|N:B\to B\|<1$ and ${\bf 1}_B-N$ is invertible, with $({\bf 1}_B-N)^{-1}=\sum_{k\geq 0}N^k$ -- summation in the operator norm. Now, it is easily seen that
$$
({\bf 1}_B-N)\circ P\circ  ({\bf 1}_B-N)^{-1}
$$
is a projection of $B$ (hence of $A$) onto $A'$.
\end{proof}

The proof also works replacing $\ell_2$ by any other
``complementably minimal'' space (those Banach spaces all whose
infinite dimensional closed subspaces contain subpaces isomorphic
to and complemented in the whole space) such as $\ell_p$ for
$1<p<\infty$. This implies that Proposition \ref{nocomp2} extends
almost verbatim for $1<p<\infty$.

\subsection{A twisted sum of $Z_2$ containing $\ell_2$ complemented}
It is quite surprising that there exists a twisted sum of $Z_2$
containing complemented copies of $\ell_2$. But they exist:

\begin{proposition} There is a (nontrivial) exact sequence
$$\begin{CD}
0@>>> Z_2 @>>> \ell_2 \oplus \Z^{(3)} @>>> Z_2 @>>>0.
\end{CD}
$$
\end{proposition}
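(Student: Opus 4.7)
I would construct the sequence as a pullback and then exploit the no-embedding result for $\mathscr Z^{(3)}$ into twisted Hilbert spaces. Let $P$ be the pullback
\[
P \;=\; \{(\xi,y)\in \mathscr Z^{(3)}\times Z_2 : \pi_{3,1}(\xi)=\pi_{2,1}(y)\},
\]
formed from the quotient map $\pi_{3,1}\colon \mathscr Z^{(3)}\to \ell_2$ of the exact sequence $0\to Z_2\to \mathscr Z^{(3)}\to \ell_2\to 0$ and the Kalton-Peck quotient $\pi_{2,1}\colon Z_2\to \ell_2$. Standard pullback yoga automatically yields two exact sequences with $P$ in the middle,
\[
0\to \ell_2 \to P\to \mathscr Z^{(3)}\to 0, \qquad 0\to Z_2\to P\to Z_2\to 0,
\]
where the copies of $\ell_2$ and $Z_2$ sitting on the left are $\ker\pi_{2,1}$ (in the second factor) and $\ker\pi_{3,1}$ (in the first factor), respectively.

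Next I would split the first of these sequences. The key compatibility is $\pi_{2,1}\circ \pi_{3,2}=\pi_{3,1}$, since both sides are projection onto the last coordinate of an element of $\mathscr Z^{(3)}$. Therefore $\sigma(\xi):=(\xi,\pi_{3,2}(\xi))$ is a bounded linear section $\mathscr Z^{(3)}\to P$ of the first projection $P\to \mathscr Z^{(3)}$. Hence $P\cong \ell_2\oplus \mathscr Z^{(3)}$ and the second sequence becomes the desired extension
\[
0\to Z_2\to \ell_2\oplus \mathscr Z^{(3)} \to Z_2\to 0.
\]

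Finally, for the nontriviality. Suppose for contradiction that the extension splits; then
\[
\ell_2\oplus \mathscr Z^{(3)} \;\cong\; Z_2\oplus Z_2.
\]
But the right-hand side is a twisted Hilbert space, as witnessed by the direct sum of two Kalton-Peck sequences $0\to \ell_2\oplus \ell_2\to Z_2\oplus Z_2\to \ell_2\oplus \ell_2\to 0$. Consequently $\ell_2\oplus \mathscr Z^{(3)}$ would be a twisted Hilbert space, and $\mathscr Z^{(3)}$, being a (complemented) subspace of it, would embed in a twisted Hilbert space. This contradicts the embedding part of the ``Yost'' proposition earlier in the paper, whose cotype-$2$ lower bound shows that $\mathscr Z^{(3)}$ cannot be embedded in any twisted Hilbert space. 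The pullback construction and the splitting step are both formal; the only substantive input is this non-embedding statement, which is already in hand, so no further obstacle remains.
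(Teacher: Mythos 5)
Your proof is correct, and it reaches the sequence by a genuinely different packaging than the paper. The paper applies the ``diagonal pushout'' lemma of Castillo--Moreno to the commutative square whose rows are $0\to\mathscr Z^{(2)}\to\mathscr Z^{(3)}\to\ell_2\to 0$ and $0\to\ell_2\to\mathscr Z^{(2)}\to\ell_2\to 0$; that lemma outputs the exact sequence $0\to Z_2\to\mathscr Z^{(3)}\oplus\ell_2\to Z_2\to 0$ in one stroke. You instead form the pullback $P$ of $\pi_{3,1}$ and $\pi_{2,1}$ over $\ell_2$ and then split one of its two canonical sequences via the compatibility $\pi_{2,1}\circ\pi_{3,2}=\pi_{3,1}$. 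These are two faces of the same homological construction (the kernel of the pushout quotient $(b,c)\mapsto v(b)-\jmath(c)$ is exactly a pullback), but your version is self-contained, needs no citation for the diagonal sequence, and makes the identification $P\cong\ell_2\oplus\mathscr Z^{(3)}$ completely explicit, at the cost of the extra splitting step; note that identifying $\ker\pi_{3,1}$ with $Z_2$ uses Theorem~\ref{exact}, which you correctly take for granted. For nontriviality the two arguments genuinely diverge: the paper uses $Z_2\cong Z_2\oplus Z_2$ together with the Kalton--Peck fact that $Z_2$ contains no complemented copy of $\ell_2$ (so $Z_2\cong\ell_2\oplus\mathscr Z^{(3)}$ is absurd), whereas you observe that $Z_2\oplus Z_2$ is a twisted Hilbert space and invoke the cotype-$2$ lower bound showing $\mathscr Z^{(3)}$ embeds in no twisted Hilbert space. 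Both inputs are available in the paper; your route dispenses with the isomorphism $Z_2\cong Z_2\oplus Z_2$, while the paper's dispenses with the (easy but unstated) remark that the cotype-$2$ estimate passes to subspaces. Either way, the proof stands.
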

\begin{proof} Recall from \cite[p.257]{castmoreisr} the construction of the so-called diagonal
push-out sequence: In a commutative diagram
$$
\begin{CD}
0@>>>A@>\imath >> B@>>> E@>>> 0\\
&& @V u VV @VV v V@|\\
0@>>>C@>\jmath>> D@>>> E@>>>0
\end{CD}
$$
the following sequence is exact
$$
\begin{CD}
0@>>> A@>{\imath\times u}>>B\oplus C@> v\ominus\jmath >>D@>>>0,
\end{CD}
$$
where $(\imath\times u)(a)=(\imath(a),u(a))$ and
$(v\ominus\jmath)(b,c)= v(b)-\jmath(c)$. Thus, taking $n=i=1$ and
$k=j=2$ in Diagram~\ref{poz} for $\mathscr F=\mathscr
F(\ell_\infty,\ell_1)$ one gets a commutative diagram
$$
\begin{CD}
0@>>> \mathscr Z^{(2)}@>>> \mathscr Z^{(3)}@>>> \ell_2 @>>> 0\\
&&@VVV @VVV@|\\
0@>>> \ell_2 @>>> \mathscr Z^{(2)}@>>>\ell_2 @>>> 0,
\end{CD}
$$
from which, recalling that $\Z^{(2)} =Z_2$, one obtains an exact sequence
$$
\begin{CD}
0@>>> Z_2@>>> \ell_2 \oplus \mathscr Z^{(3)}@>>> Z_2 @>>> 0
\end{CD}
$$
which is not trivial since otherwise $Z_2 \simeq Z_2\oplus Z_2=
\ell_2 \oplus \mathscr Z^{(3)}$, something impossible since $Z_2$
does not contain $\ell_2$ complemented.\end{proof}

 Therefore $\ell_2\oplus \mathscr Z^{(3)}$ is a
twisted sum of $Z_2$, which contains complemented Hilbert
subspaces. We cannot resist to remark that while nobody knows
whether $Z_2$ is isomorphic to its hyperplanes, it is obvious that
$\ell_2\oplus \mathscr Z^{(3)}$ is isomorphic to its own
hyperplanes.

\section{Open ends}

\subsection{On the splitting of the first extension}
Very little is known about the splitting of the ``first''
exact sequence $0\to\mathscr Z^{(1)}\to \mathscr Z^{(2)}\to\mathscr
Z^{(1)}\to0$ outside of the case in which it is induced by a
couple of Banach lattices. On the other hand, Corollary \ref{nonsplitting} shows that
once the first exact sequence obtained in an interpolation schema
is nontrivial, the same happens to all the rest. Is it true the
reciprocal? That is, suppose that $\mathscr Z^{(2)}$ is a trivial
self-extension of $\mathscr Z^{(1)}$. Does it follow that the
extensions $0\to\mathscr Z^{(k)}\to \mathscr Z^{(n+k)}\to\mathscr
Z^{(n)}\to 0$ are trivial for all values of $n$ and $k$?

\subsection{Other twisted Hilbert spaces}
Suppose we have a Banach space $X_0$
with a normalized basis that we use to consider $X_0$ inside
$\ell_\infty$. Take $X_1=\overline{X_0'}$ the complex conjugate of
the closure $X_0'$ of the subspace spanned by the coordinate
functionals in $X_0^*$. Then $(X_0,X_1)$ is a Banach couple, $[X_0,X_1]_{1/2}$ is a Hilbert space (see
\cite[around Theorem 3.1]{pisier-handbook}), and thus $\mathscr
Z^{(2)}$ is a twisted Hilbert space. We believe that $\mathscr
Z^{(2)}$ is a Hilbert space if and only if $X_0=\ell_2$.

\subsection{Other interpolation methods} Most of the work done here can be reproduced for real interpolation
 by either the $K$ or $J$ methods as it can be deduced from the results in this paper and those in \cite{ccs2}. It would be interesting to know to what extent the same occurs for other interpolation methods.

\subsection{About the vanishing of $\Ext^2$.}
A problem at the horizon, for us, was whether
the second derived functor $\Ext^2$ vanishes on Hilbert spaces, which can be
understood as a twisted reading of a question of Palamodov for Fr\'echet spaces (\cite[Section 12, Problem 6]{p}).

Given Banach spaces $A$ and $D$, one considers the set of all possible four-term exact sequences
\begin{equation}\label{four}
\begin{CD}
0@>>> A@> I>> B@>U>> C@>Q>> D@>>> 0.
\end{CD}
\end{equation}
Under a certain equivalence relation, which is not necessary to define here, the set of such four-term exact sequences becomes a linear space denoted by $\Ext^2(D,A)$, whose zero is (the class of all exact sequences equivalent to)
\begin{equation*}
\begin{CD}
0@>>> A@= A@>0>> D@= D@>>> 0.
\end{CD}
\end{equation*}

It is important to realize that if we are given a short exact sequence of the form
\begin{equation}\label{ABE}
\begin{CD}
0@>>> A@> I>> B@> P >> E @>>> 0
\end{CD}
\end{equation}
and another sequence of the form
\begin{equation}\label{ECD}
\begin{CD}
0@>>> E@> J >> C@> Q >> D @>>> 0
\end{CD}
\end{equation}
then we may form a four-term sequence
\begin{equation}\label{four}
\begin{CD}
0@>>> A@> I>> B@> U >> C@> Q >> D @>>> 0
\end{CD}
\end{equation}
just taking $U=J\circ P$. This resulting ``long'' sequence will be zero in $\Ext^2(D,A)$ if and only if (\ref{ABE}) and (\ref{ECD})
fit inside a commutative diagram
\begin{equation}\label{F2=0}
\begin{CD}
&&&&0&&0\\
&&& & @AAA @AAA\\
&&&&D @= D\\
&&& & @AAA @AAA\\
0@>>> A @>>> F @>>> C @>>> 0\\
& & @| @AAA @AAA\\
0@>>> A@>>> B@>>> E@>>> 0\\
&&& & @AAA @AAA\\
&&&&0&&0
\end{CD}
\end{equation}
whose rows and columns are exact.

The skeptical reader will wonder how is this related to the main subject of the paper.
Let $\mu$ a $\sigma$-finite measure on a measure space $S$
and let $L_0$ be the space of all (complex) measurable functions
on $S$, where we identify two functions if they agree almost
everywhere. If $X$ is a K\"othe space on $\mu$, then a centralizer
on $X$ is a homogeneous mapping $\Omega:X\to L_0$ having the
following property: there is a constant $C=C(\Omega)$  such that,
for every $f\in X$ and every $a\in L_\infty$, the difference
$\Omega(af)-a\Omega(f)$ belongs to $X$ and
$\|\Omega(af)-a\Omega(f)\|_X\leq C\|a\|_\infty\|f\|_X$. Every
centralizer is quasilinear and so it induces a twisted sum
$X\oplus_\Omega X=\{(y,x): x, y-\Omega(x)\in X\}$ which is quasinormed
by the functional $\|(y,x)\|_\Omega=\|y-\Omega(x)\|_X+\|x\|_X$. A
widely ignored result by Kalton states that if $X$ is
super-reflexive then one can construct an admissible space of
analytic functions $\mathscr F$ on a disc centered at the origin
such that:
\begin{itemize}
\item $X=X^{(1)}_0$ (evaluation at $0$) up to equivalent norm;
\item $\Omega\approx\Omega_{1,1}$, where $\Omega_{1,1}$ is the
corresponding ``derivation'' (see Section~\ref{twistedsum}).
\end{itemize}
This means that for every $x\in X$ the difference
$\Omega(x)-\Omega_{1,1}(x)$ falls in $X$ and one has the estimate
$\|\Omega(x)-\Omega_{1,1}(x)\|_X\leq K\|x\|_X$ for some constant $K$
and every $x\in X$. Actually one can construct $\mathscr F$ by
using no more than three K\"othe spaces on the boundary of the
disc \cite[Theorem 7.9]{kal-diff}; if $\Omega$ is ``real'' in the
sense that it takes real functions into real functions, then two
K\"othe spaces on a strip suffice \cite[Theorem 7.6]{kal-diff}. In
particular since $X\oplus_\Omega X= X\oplus_{\Omega_{1,1}} X=
X^{(2)}_0$, up to equivalent (quasi-) norms, we see that the
self-extension induced by $\Omega$ fits into the commutative diagram (the operators $\imath_{n,k}$ are those appearing in Proposition \ref{operators}):
$$
\begin{CD}
0@>>> X@>>> X^{(3)}_0@>>> X\oplus_\Omega X @>>> 0\\
& & @| @A {\imath_{2,3}}AA @A {\imath_{1,2}}AA\\
0@>>> X@>>> X\oplus_\Omega X@>>> X@>>> 0\\
\end{CD}
$$which, when completed, has the same form as (\ref{F2=0}) witnessing that the juxtaposition of two copies of the extension induced by $\Omega$, namely
$$
\begin{CD}
0@>>> X@>>>X\oplus_\Omega X@>>> X\oplus_\Omega X@>>> X@>>> 0,
\end{CD}
$$ is zero in $\Ext^2(X,X)$. We do not know what happens with two different centralizers; more specifically, we ask the following. Let $\Omega$ and $\Phi$ be centralizers on a super-reflexive K\"othe space $X$ and consider the twisted sums $X\oplus_\Omega X$ and $X\oplus_\Phi X$. If, as before, we set $I(x)=(x,0), U(x,y)=(y,0)$ and $Q(x,y)=y$, can the exact sequence
$$
\begin{CD}
0@>>> X@>I>>X\oplus_\Omega X@>U>> X\oplus_\Phi X@>Q>> X@>>> 0
\end{CD}
$$
be nonzero in $\Ext^2(X,X)$?


\end{document}